\newcommand{\arxiv}[1]{\href{http://www.arXiv.org/abs/#1}{arXiv:#1}}
\newtheorem{theorem}{Theorem}[section]
\newtheorem{lemma}[theorem]{Lemma}
\newtheorem{proposition}[theorem]{Proposition}
\newtheorem{remark}[theorem]{Remark}
\newenvironment{proof}[1][Proof]{\noindent\textbf{#1.} }{\ \rule{0.5em}{0.5em}}
\def\semr{{\mathcal S}}
\def\Nat{{\mathcal N}}
\def\supp{\operatorname{supp}}
\def\bzero{{\mathbf 0}}
\def\bunity{{\mathbf 1}}
\def\mysup{\vee}
\def\myinf{\wedge}
\def\R{{\mathbb R}}
\def\Rp{\R_+}
\def\boldunity{\bunity}
\def\digr{D}
\def\accesses{\longrightarrow}
\begin{document}

\title{$Z$-matrix equations in max algebra, nonnegative linear algebra and
other semirings\thanks{%
This research was supported by EPSRC grant RRAH12809 (all authors) and partially by RFBR-CRNF grant 11-01-93106 (S.~Sergeev)}}
\author{Peter Butkovi\v{c}\thanks{%
Corresponding author. School of Mathematics, University of Birmingham,
Edgbaston, Birmingham B15 2TT, United Kingdom, p.butkovic@bham.ac.uk}, Hans
Schneider\thanks{%
Department of Mathematics, University of Wisconsin-Madison, Madison,
Wisconsin 53706, USA, hans@math.wisc.edu} and Serge\u{\i} Sergeev\thanks{%
INRIA and CMAP Ecole Polytechnique, 91128 Palaiseau Cedex, France,
sergiej@gmail.com}}
\maketitle

\begin{abstract}
We study the max-algebraic analogue of equations involving $Z$-matrices and $%
M$-matrices, with an outlook to a more general algebraic setting. We
show that these equations can be solved using the Frobenius trace
down method in a way similar to that in non-negative linear algebra
\cite{Fro-1912,HSHershkowitz,Sch-86}, characterizing the solvability in terms of
supports and access relations.  We give a description of the
solution set as combination of the least solution and the eigenspace
of the matrix, and provide a general algebraic setting in which this result holds.

\medskip\noindent
{\bf Keywords:} Max-algebra; nonnegative linear algebra; idempotent semiring; Z-matrix equations; Kleene star

\medskip\noindent
{\bf AMS codes:} 15A80, 15A06, 15B48.
%15A80=max-plus, 15A06=linear equations, 15B48 = positive matrices and their generlizations
\end{abstract}

\section{Introduction}

\bigskip A $Z$-matrix is a square matrix of the form $\lambda I-A$ where $%
\lambda $ is real and $A$ is an (elementwise) nonnegative matrix. It
is called an $M$-matrix if $\lambda \geq \rho \left( A\right) ,$
where $\rho \left( A\right) $ is the Perron root (spectral radius)
of $A$ and it is nonsingular if and only if $\lambda >\rho \left(
A\right) $. Since their introduction by Ostrowski \cite{Ostrowski}
$M$-matrices have been studied in many papers and they have found
many applications. The term $Z$-matrix was introduced by
Fiedler-Pt\'{a}k \cite{fiedlerptak}.

Results on the existence, uniqueness and nonnegativity of a solution $x$ of
the equation $\left( \lambda I-A\right) x=b$ for a given nonnegative vector $%
b$ appear in many places (e.g. Berman-Plemmons \cite{Bermanplemmons}
in the case of a nonsingular $M$-matrix or an irreducible singular
$M$-matrix). Using the Frobenius normal form of $A$ and access
relation defined by the graph of the matrix, Carlson \cite{carlson}
studied the existence and uniqueness of nonnegative solutions $x$ of
this equation in the case of a
reducible singular $M$-matrix, and his results were generalized to all $Z$%
-matrices in Hershkowitz-Schneider \cite{HSHershkowitz}.

The purpose of the current paper is to prove corresponding results
in the max times algebra of nonnegative matrices, unifying and
comparing them with the results in the classical nonnegative linear
algebra. We also notice that the basic proof techniques are much
more general. In particular, we exploit a generalization of the
Frobenius trace down method~\cite{Fro-1912,Sch-86}. This
generalization is reminiscent of the universal algorithms developed by
Litvinov, Maslov, Maslova and Sobolevski\u{\i}~\cite{LM-95,LMa-00, LS-98}, based on the
earlier works on regular algebra applied to path-finding problems by
Backhouse, Carr\'e and Rote~\cite{BC-75,Rot-85}. Following this line
allows to include other examples of idempotent semirings, such as
max-min algebra~\cite{Gav:04} and distributive
lattices~\cite{Tan-98}. A more general theoretic setup is described
in Section~\ref{s:algen}. It is very close to Cohen, Gaubert, Quadrat, Singer~\cite{CGQS-05} and 
Litvinov, Maslov, Shpiz~\cite{LMS-01}.

The main object of our study is $Z$-matrix equation
\begin{equation}
Ax+b=\lambda x.  \label{Eq Ax+b=lx}
\end{equation}
over semirings. In the classical nonnegative algebra and max-plus
algebra, any $\lambda\neq 0$ is invertible, which allows to
reduce~(\ref{Eq Ax+b=lx}) to
\begin{equation}
Ax+b= x.  \label{Eq Ax+b=x}
\end{equation}
In max-plus algebra, this equation is sometimes referred to as
discrete Bellman equation, being related to the Bellman optimality
principle and dynamic programming~\cite{BCOQ,GM:08,KM:97}. In
particular, it is very well-known that this equation has the least
solution.  
However  (to the authors' knowledge) a universal and complete
description of solutions of~$\lambda x= Ax+b$ or even~$x=Ax+b$,
which would cover both classical nonnegative and max-plus algebra
cases, is not found (surprisingly) in the key monographs on max-plus algebra and
related semirings.  Such a description is what we try to achieve in
this paper, see Theorems~\ref{Th main 1} and~\ref{Th main 2}. In
brief, the results in the case of max times linear algebra are
similar to those in classical $Z$-matrix
theory~\cite{HSHershkowitz}, but they are not identical with them.
Details are given in the main sections. The situation is analogous
to that for the Perron-Frobenius equation $Ax=\lambda x$, as may be
seen by comparing the results in Butkovi\v{c}~\cite{PBbook}, Sect.
4.5, for max algebra with those in Hershkowitz and Schneider
\cite{HSHershkoverview}, Sect.~3, for classical nonnegative algebra.

The rest of the paper consists of Prerequisites (Sect.~2), Theory of
$Z$-matrix equations (Sect.~3) and Algebraic generalization
(Sect.~4). Prerequisites are devoted to the general material, mostly
about max-plus algebra: Kleene star, Frobenius normal forms and
spectral theory in the general (reducible) case. Theory of Z-matrix
equations over max-plus algebra stands on two main results.
Theorem~\ref{Th main 1} describes the solutions of~(\ref{Eq Ax+b=x})
as combinations of the least solution $A^{\ast}b$ and the
eigenvector space. We emphasize the algebraic generality of the
argument. Theorem~\ref{Th main 2} exploits the Frobenius trace down
method. This method serves both for theoretic purposes (to provide a
necessary and sufficient condition for the existence of solutions,
and to characterize the support of the least solution) and as an
algorithm for calculating it. As an outcome, we get both
combinatorial and geometric description of the solutions. The
results in max-times algebra are compared with the case of
nonnegative matrix algebra~\cite{HSHershkowitz}. The paper ends with
Section~\ref{s:algen}, devoted to an abstract algebraic setting for
which Theorem~\ref{Th main 1} holds, in the framework of semirings,
distributive lattices and lattice-ordered
groups~\cite{Bir:79,Gol:03}.

We use the conventional arithmetic notation $a+b$ and
$ab$ for the operations in semirings, to emphasize the common
features of the problem over classical nonnegative algebra and in
the idempotent semirings, viewing max-times algebra (isomorphic to
max-plus algebra) as our leading example.

We note that a complete description of solutions of~$x=Ax+b$ was also achieved by Krivulin~\cite{Kri-06,Kri-06b,Kri:09}, 
for the case of max algebra and related semirings with idempotent addition
($a\oplus a=a$). His proof of Theorem~\ref{Th main 1}, recalled here in a remark following that theorem, 
is different from the one found by the authors. We show that Krivulin's proof also
works both for max algebra and nonnegative linear algebra, and admits further algebraic generalizations.
The case of reducible matrix $A$ and general support of $b$ has been also investigated, see~\cite{Kri-06} Theorem 2
or~\cite{Kri:09} Theorem 3.2, which can be seen as a corollary of 
Theorem~\ref{Th main 2} of the present paper with application of the
max-plus spectral theory, see Theorem~\ref{t:red-spec}.

\section{Prerequisites\label{Sec Prereq}}

\subsection{Kleene star and the optimal path problem}

The main motivation of this paper is to unite and compare the
$Z$-equation theory in the classical nonnegative linear algebra, and
the max-times linear algebra. Algebraically, these structures are
semirings~\cite{Gol:03} (roughly speaking, "rings without
subtraction", see Section~\ref{s:algen} for a rigorous definition).
Thus we are focused on

{\bf Example 1: Max-times algebra.} Nonnegative numbers, endowed
with the usual multiplication $\times$ and the unusual addition
$a+b:=\max(a,b)$.

{\bf Example 2: Usual nonnegative algebra.} Nonnegative numbers,
endowed with usual arithmetics $+,\times$.

Some results in this paper will have a higher level of generality,
which we indicate by formulating them in terms of a "semiring
$\semr$". Namely, this symbol "$\semr$" applies to a more general
algebraic setting provided in Section~\ref{s:algen}, covering the
max-times algebra and the nonnegative algebra. Before reading the
last section, it may be assumed by the reader that $\semr$ means
just "max-times or usual nonnegative".

The matrix algebra over a semiring $\semr$ is defined in the usual
way, by extending the arithmetical operations to matrices and
vectors, so that $(A+B)_{ij}=a_{ij}+b_{ij}$ and $(AB)_{ik}=\sum_j
a_{ij}b_{jk}$ for matrices $A,B$ of appropriate sizes. The unit
matrix (with $1$'s on the diagonal and $0$'s off the diagonal) plays
the usual role.

Denote $N=\left\{ 1,...,n\right\} .$ For $x\in \semr^{n},$ $x>0$
means $x_{i}>0$ for every $i.$ Similarly $A>0$ for
$A\in\semr^{n\times n}.$  We also denote:
\begin{equation}
\begin{split}
\label{closures} A^{\ast } &=I+A+A^2+\ldots=\sup_{k\geq
0}(I+A+\ldots_+A^k).
\end{split}
\end{equation}
In (\ref{closures}), we have exploited the nondecreasing property of
addition. $A^{\ast}$ is also called the {\em Kleene star}, and it is
related to the optimal path problem in the following way.

The \textit{digraph associated with} $A=\left( a_{ij}\right) \in
\semr^{n\times n}$ is $D_{A}=(N,E),$ where $E=\left\{ \left(
i,j\right) ;a_{ij}>0\right\}$. The weight of a path on $D_A$ is
defined as the product of the weights of the arcs, i.e., the
corresponding matrix entries. It is easy to check (using the
distributivity law) that $(A^k)_{ij}$ is the sum of the weights of
all paths of length $k$ connecting $i$ to $j$. Further, an entry
$(A^*)_{ij}$ collects in a common summation (possibly divergent and
formal) all weights of the paths connecting $i$ to $j$, when $i\neq
j$.

Note that $A^*=(I-A)^{-1}$ in the case of the classical arithmetics,
and $A^*$ solves the optimal path problem in the case of the
max-times algebra (because the summation is maximum).

Thus the Kleene star can be described in terms of paths or access
relations in $D_A$. For $i\neq j$, we say that $i$ {\em accesses}
$j$, denoted $i\accesses j$, if there is a path of nonzero weight
connecting $i$ to $j$, equivalently, $(A^*)_{ij}\neq 0$. We also
postulate that $i\accesses i$. The notion of access is extended to
subsets of $N$, namely, $I\accesses J$ if $i\accesses j$ for some
$i\in I$ and $j\in J$.

Both in max-plus algebra and in nonnegative algebra the Kleene star
series may diverge to infinity (in other words, be unbounded). In
both cases the convergence is strongly related to the largest
eigenvalue of $A$ (w.r.t. the eigenproblem $Ax=\lambda x$), which we
denote by $\rho(A)$. This is also called the {\em Perron root} of
$A$. A necessary and sufficient condition for the convergence is
$\rho(A)<1$ in the case of the ordinary algebra, and $\rho(A)\leq 1$
in the case of the max-times algebra. In the max-times algebra (but
not in the usual algebra) $A^*$ can be always truncated, meaning
$A^*=I+A+\ldots +A^{n-1}$ where $n$ is the dimension of $A$, in the
case of convergence. This is due to the finiteness of $D_A$ and the
optimal path interpretation, see~\cite{BCOQ,PBbook} for more
details.

In the case of max-times algebra, $\rho(A)$ is equal to the
\textit{maximum geometric cycle mean} of $A$, namely
\begin{equation*}
\rho (A) =\max \left\{ \sqrt[k]{%
a_{i_{1}i_{2}}a_{i_{2}i_{3}}...a_{i_{k}i_{1}}};i_{1},...,i_{k}\in
N,k=1,2,...\right\} .
\end{equation*}
This quantity can be computed in $O(n^3)$ time by Karp's algorithm,
see e.g.~\cite{BCOQ,PBbook}.

\subsection{Frobenius normal form}

$A=\left( a_{ij}\right) \in \Rp^{n\times n}$ is called \textit{%
irreducible} if $n=1$ or for any $i,j\in N$ there are $%
i_{1}=i,i_{2},...,i_{k}=j,$ such that $%
a_{i_{1}i_{2}}a_{i_{2}i_{3}}...a_{i_{k-1}i_{k}}>0;$ $A$ is called \textit{%
reducible} otherwise. In other words, a matrix is called irreducible
if the associated graph is strongly connected. Note that if $n>1$
and $A$ is irreducible then $A\neq 0.$ Hence the assumption "$A$
irreducible, $A\neq 0"$ merely means that $A$ is irreducible but not
the $1\times 1$ zero matrix. (It is possible to extend these notions to general semirings
with no zero divisors, but we will not require this in the paper.)

In order to treat the reducible case for max-times algebra and the
(classical) nonnegative linear algebra, we recall some standard
notation and the Frobenius normal form (considering it for general
semirings will be of no use here). If
\begin{equation*}
1\leq i_{1}<i_{2}<...<i_{k}\leq n,K=\{i_{1},...,i_{k}\}\subseteq N
\end{equation*}%
then $A_{KK}$ denotes the \textit{principal submatrix}
\begin{equation*}
\left(
\begin{array}{ccc}
a_{i_{1}i_{1}} & ... & a_{i_{1}i_{k}} \\
... & ... & ... \\
a_{i_{k}i_{1}} & ... & a_{i_{k}i_{k}}%
\end{array}%
\right)
\end{equation*}%
of the matrix $A=(a_{ij})$ and $x_K$ denotes the subvector $%
(x_{i_{1}},...,x_{i_{k}})^{T}$ of the vector $x=(x_{1},...,x_{n})^{T}$.

If $D=(N,E)$ is a digraph and $K\subseteq N$ then $D(K)$ denotes the \textit{%
induced subgraph} of $D,$ that is%
\begin{equation*}
D(K)=(K,E\cap (K\times K)).
\end{equation*}
Observe that $\rho \left( A\right) =0$ if and only if $D_{A}$ is
acyclic.

Every matrix $A=(a_{ij})\in \Rp^{n\times n}$ can be transformed by
simultaneous permutations of the rows and columns in linear time to
a \textit{Frobenius normal form} \cite{Handbook}
\begin{equation}
\left(
\begin{array}{cccc}
A_{11} & 0 & ... & 0 \\
A_{21} & A_{22} & ... & 0 \\
... & ... & ... & ... \\
A_{r1} & A_{r2} & ... & A_{rr}%
\end{array}%
\right) ,  \label{fnf}
\end{equation}%
where $A_{11},...,A_{rr}$ are irreducible square submatrices of $A$,
corresponding to the partition $N_1\cup\ldots\cup N_r=N$ (that is,
$A_{ij}$ is a shortcut for $A_{N_iN_j}$). The sets $N_{1},...,N_{r}$
will be called \textit{classes (of }$A$). It follows that each of
the induced subgraphs $D_{A}(N_{i})$ $(i=1,...,r)$ is strongly
connected and an arc from $N_{i}$ to $N_{j}$ in $D_{A}$ exists only
if $i\geq j.$

If $A$ is in the Frobenius normal form (\ref{fnf}) then the \textit{
reduced graph}, denoted $R(A)$, is the digraph whose nodes are the
classes $N_{1},...,N_{r}$ and the set of arcs is
\begin{equation*}
\{(N_{i},N_{j});(\exists k\in N_{i})(\exists \ell \in N_{j})a_{k\ell }>0\}).
\end{equation*}%
In addition we postulate that each class has a self-loop (useful if
FNF contains trivial classes consisting of one diagonal zero entry).
In the max-times algebra and the nonnegative matrix algebra, the
nodes of $R(A)$ are marked by the corresponding greatest eigenvalues
(Perron roots) $\rho_i:=\rho(A_{ii})$. %Observe that $R(A)$ does not
%have cycles ois acyclic.

%The symbol $N_{i}\longrightarrow N_{j}$ means that there is a
%directed path from $N_{i}$ to $N_{j}$ in $R(A) $ or, equivalently,
%from each node in $N_{i}$ to each node in $N_{j}$ in $D_{A}$.

Simultaneous permutations of the rows and columns of $A$ are
equivalent to calculating $P^{-1}AP,$ where $P$ is a generalized
permutation matrix. Such transformations do not change the
eigenvalues, and the eigenvectors before and after such a
transformation only differ by the order of their components. So when
solving the eigenproblem, we may assume without loss of generality
that $A$ is in a Frobenius normal form, say (\ref{fnf}).

\subsection{Eigenvalues and eigenvectors in max-times algebra}
\label{ss:sptheory}

 It is intuitively clear that all eigenvalues of $A$ are
among the unique eigenvalues of diagonal blocks. However, not all of
these eigenvalues are also eigenvalues of $A.$ The following key
result, describing the set $\Lambda(A)$ of all eigenvalues of $A$ in
{\bf max-times algebra} (that is, set of all $\lambda$ such that
$Ax=\lambda x$ has a nonzero solution $x$ in max-times algebra)
appeared for the first time independently in Gaubert's thesis
\cite{gaubertthesis} and Bapat~et~al.~\cite{bapatfirst}, see also Butkovi\v{c}~\cite{PBbook}.

\begin{theorem}[(cf. \cite{bapatfirst,PBbook,gaubertthesis})]
\label{spectral} Let (\ref{fnf}) be a Frobenius
normal
form of a matrix $A\in \mathbb{R}_{+}^{n\times n}.$ Then%
\begin{equation}
\label{e:spcond} \Lambda (A)=\{\rho_j;\rho_j\geq\rho_i\ \text{for
all $N_i\accesses N_j$}\}.
\end{equation}
\end{theorem}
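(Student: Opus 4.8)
The plan is to prove the two inclusions in~(\ref{e:spcond}) separately, normalising throughout by the candidate eigenvalue. Since the eigenvalues of interest are positive, $Ax=\lambda x$ is equivalent to the fixed-point equation $(\lambda^{-1}A)x=x$, and in the Frobenius normal form~(\ref{fnf}) each block row becomes a $Z$-type equation $(\lambda I-A_{kk})x_k=\sum_{m<k}A_{km}x_m$ whose least solution is controlled by the Kleene star of $\lambda^{-1}A_{kk}$, convergent precisely when $\rho_k\leq\lambda$. I would freely use that each irreducible diagonal block $A_{kk}$ has the single eigenvalue $\rho_k$ with a strictly positive eigenvector, and that in~(\ref{fnf}) every arc runs from a higher- to a lower-indexed class, so that $N_i\accesses N_j$ forces $i\geq j$.

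For the inclusion $\supseteq$ I would fix a class $N_j$ satisfying the spectral condition, put $\lambda=\rho_j$, and construct an eigenvector explicitly: set $x_j$ to a positive eigenvector of $A_{jj}$, set $x_k=\bzero$ for every $k<j$, and then determine the remaining blocks in order of increasing index by $x_k=(\lambda^{-1}A_{kk})^{\ast}(\lambda^{-1}\sum_{m<k}A_{km}x_m)$. A short induction shows that the classes receiving a nonzero value all lie among those accessing $N_j$; since $N_j$ is spectral, each such class has $\rho_k\leq\rho_j=\lambda$, so every Kleene star invoked converges and the recursion is well defined. The resulting $x$ is nonzero (positive on $N_j$) and satisfies each block row of $Ax=\lambda x$ by construction, giving $\rho_j\in\Lambda(A)$.

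The inclusion $\subseteq$ is where the real work lies, and I expect it to be the main obstacle. Given $Ax=\lambda x$ with $x\neq\bzero$, I would first establish two facts about any class $N_k$ with $x_k\neq\bzero$: (A) $x_k$ is in fact everywhere positive and $\rho_k\leq\lambda$, because block row $k$ gives the subinvariance $A_{kk}x_k\leq\lambda x_k$, irreducibility of $A_{kk}$ upgrades a nonzero $x_k$ to a positive one, and the maximum cycle mean bound then yields $\rho_k\leq\lambda$; and (B) the support is closed under access, that is $N_i\accesses N_j$ with $x_j\neq\bzero$ implies $x_i\neq\bzero$, which I would obtain by propagating strict positivity backwards along an access path (a positive downstream block forces a positive entry in the block row above it). Taking $N_j$ to be the support class of least index, the terms $A_{jm}x_m$ with $m<j$ vanish, so $A_{jj}x_j=\lambda x_j$ and hence $\lambda=\rho_j$; combining (A) and (B), every class accessing $N_j$ is in the support and so has $\rho\leq\lambda=\rho_j$, which is exactly the spectral condition. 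The delicate points are the positivity bookkeeping in step (B) and the correct use of irreducibility in step (A); the degenerate cases $\rho_j=0$ arising from trivial one-element classes would need a separate, but routine, check.
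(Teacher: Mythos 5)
The paper never proves Theorem~\ref{spectral}: it is quoted as a known result (cf.\ Gaubert's thesis, Bapat et al., Butkovi\v{c}'s book), so there is no internal proof to compare yours against. Judged on its own, your argument is correct, and it is essentially the standard proof from the cited literature; moreover it meshes exactly with the toolkit this paper does develop. The cycle-mean bound in your step (A) is the first half of the proof of Lemma~\ref{wellknown} (the max-times Collatz--Wielandt inequality); the convergence criterion you invoke ($(\lambda^{-1}A_{kk})^{\ast}$ converges iff $\rho_k\leq\lambda$) is the one stated in the Prerequisites and in Lemma~\ref{oracle}; and your recursion $x_k=(\lambda^{-1}A_{kk})^{\ast}\bigl(\lambda^{-1}\sum_{m<k}A_{km}x_m\bigr)$ for the inclusion $\supseteq$ is precisely the Frobenius trace-down step~(\ref{lplus2}) used later in Theorem~\ref{Th main 2}, with the coupling terms in the role of $b$; Theorem~\ref{t:equivalence} then guarantees that each block row is satisfied with equality. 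Your free use of the irreducible spectral theorem (unique eigenvalue $\rho_k$, positive eigenvector) is legitimate, since that is the standard building block the reducible statement is designed to extend.

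Two points should be made explicit in a full write-up. First, every positivity-propagation step you use (the upgrade of a nonzero $x_k$ to a positive one in (A), and the backward propagation in (B)) divides by $\lambda$, so the case $\lambda=0$ must be split off in \emph{both} inclusions: if $Ax=0$ with $x\neq 0$, then no arc of $D_A$ may enter $\supp(x)$, so every supported class is trivial and accessed by no other class, which gives the spectral condition with $\rho_{j}=0$; conversely, if $N_j$ is spectral with $\rho_j=0$, then the set of nodes accessing $N_j$ induces an acyclic subgraph which no outside arc enters, hence it contains a node whose whole column of $A$ is zero, and the corresponding unit vector is an eigenvector for $0$. This is the ``routine check'' you promised; it is indeed routine, but not empty, and the converse half is the less obvious one. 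Second, in your recursion you should simply set $x_k=0$ when the forcing term $\sum_{m<k}A_{km}x_m$ vanishes, rather than multiply a possibly divergent Kleene star by a zero vector; with that convention your induction (nonzero blocks occur only on classes accessing $N_j$, hence every Kleene star actually used converges) closes cleanly.
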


The same result holds in the nonnegative linear algebra, with the
non-strict inequality replaced by the strict one.

If a diagonal block $A_{jj}$ has $\rho_j\in \Lambda $, it still may
not satisfy the condition in Theorem~\ref{spectral} and may
therefore not provide any eigenvectors. So it is necessary to
identify classes $j$ that satisfy this condition and call them
\textit{spectral}. Thus $\rho_j\in \Lambda (A)$ if $N_j$ is
spectral, but not necessarily the other way round. We can
immediately deduce that all initial blocks are spectral, like in the
nonnegative linear algebra. Also, it follows that the number of
eigenvalues does not exceed $n$ and obviously, $\rho(A)
=\max_i\rho_i$, in accordance with $\rho(A)$ being the greatest
eigenvalue.

We are now going to describe, for $\lambda\in\Lambda$, the eigencone
$V(A,\lambda)$ of all vectors $x$ such that $Ax=\lambda x$. Denote
by $J_{\lambda}$ the union of all classes $N_i$ which have access to
the spectral classes corresponding to this eigenvalue.
By~(\ref{e:spcond}), $\rho_i\leq\lambda$ for all such classes. Now
we define the {\em critical graph} $C_A(\lambda)=(N_c,E_c)$
comprising all nodes and edges on {\em critical cycles} of the
submatrix $A_{J_{\lambda}J_{\lambda}}$, i.e., such cycles where
$\lambda$ is attained. This graph consists of several strongly
connected components, and let $T(A,\lambda)$ denote a set of indices
containing precisely one index from each component of
$C_A(\lambda)$. In the following, $A'(J_{\lambda})$ will denote the
$n\times n$ matrix, which has $A_{J_{\lambda}J_{\lambda}}/\lambda$
as submatrix, and zero entries everywhere else.

\begin{theorem}[(cf. \cite{bapatfirst,PBbook,gaubertthesis})]
\label{t:red-spec} Let $A\in\Rp^{n\times n}$ and
$\lambda\in\Lambda(A)$. Then
\begin{enumerate}
\item[a)] For any eigenvector $v\in V(A,\lambda)$ there exist
$\alpha_j\in\Rp$ such that $v$ is the max-times linear combination
\begin{equation}
\label{generate}
 v=\sum_{j\in T(A,\lambda)} \alpha_j
(A'(J_{\lambda}))^*_{\cdot j}.
\end{equation}
\item[b)] For any two indices $j$ and $k$ in the same component of
$C_A(\lambda)$, columns $(A'(J_{\lambda}))^*_{\cdot j}$ and
$(A'(J_{\lambda}))^*_{\cdot k}$ are proportional.
\item[c)] Vectors $(A'(J_{\lambda}))^*_{\cdot
j}$ for $j\in T(A,\lambda)$ form a basis of $V(A,\lambda)$, that is,
they generate $V(A,\lambda)$ in the sense of a) and none of them can
be expressed as a max-times linear combination of the others.
\end{enumerate}
\end{theorem}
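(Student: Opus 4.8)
The plan is to localize the problem to the block $B:=A_{J_{\lambda}J_{\lambda}}/\lambda$, whose maximum geometric cycle mean equals $1$ (all classes in $J_{\lambda}$ satisfy $\rho_i\leq\lambda$, and the spectral class itself realizes $\lambda$), and to reduce the eigenequation $Av=\lambda v$ to the fixed-point equation $Bw=w$ for $w=v_{J_{\lambda}}$. The bridge is a structural observation: no class outside $J_{\lambda}$ can access $J_{\lambda}$ in one step, since otherwise it would access a spectral class of value $\lambda$ and itself belong to $J_{\lambda}$. Hence for a vector supported on $J_{\lambda}$ the rows of $A$ indexed outside $J_{\lambda}$ annihilate it, while the rows inside reproduce $A_{J_{\lambda}J_{\lambda}}$; so $Av=\lambda v$ holds for such $v$ precisely when $Bw=w$. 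Since $A'(J_{\lambda})$ carries exactly $B$ on $J_{\lambda}$ and zeros elsewhere, its Kleene star restricts to $B^{*}$ on $J_{\lambda}\times J_{\lambda}$, and $\rho(B)=1$ guarantees convergence.

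For part a) I would first establish $\supp(v)\subseteq J_{\lambda}$ for every $v\in V(A,\lambda)$. Taking $i\in\supp(v)$, tightness $(Av)_{i}=\lambda v_{i}$ yields some $j\in\supp(v)$ with $a_{ij}v_{j}=\lambda v_{i}$; iterating produces a walk inside $\supp(v)$ that, being finite, closes into a cycle of geometric mean exactly $\lambda$. Thus this cycle attains $\lambda$ (lies in $C_{A}(\lambda)$) and every support index accesses it, which together with Theorem~\ref{spectral} forces the support into $J_{\lambda}$ and identifies the realizing cycles as critical. The localized equation then gives $B^{k}w=w$ for all $k$, so the supremum defining the star collapses to $B^{*}w=w$, i.e.\ $w=\sum_{j}w_{j}(B^{*})_{\cdot j}$. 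It remains to discard the non-critical columns: following the tight path from a non-critical support index to a critical node $\ell$ it reaches, the concatenation inequality shows $(B^{*})_{\cdot j}$ is dominated by a multiple of $(B^{*})_{\cdot\ell}$, so its contribution is absorbed. Keeping one representative per strongly connected component of $C_{A}(\lambda)$, namely the indices of $T(A,\lambda)$, yields the combination~(\ref{generate}); the bridge above shows conversely that each such column is a genuine eigenvector.

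Part b) is a direct computation on $B^{*}$: for $j,k$ in one component of $C_{A}(\lambda)$ there are optimal paths both ways lying on a critical (mean-$1$) cycle, whence $(B^{*})_{jk}(B^{*})_{kj}=1$, and the two concatenation inequalities routed through $j$ and through $k$ force $(B^{*})_{\cdot j}=(B^{*})_{kj}(B^{*})_{\cdot k}$. Part c) combines a) and b) for generation, while independence follows from the normalization $(B^{*})_{jj}=1$ together with the strict inequality $(B^{*})_{jk}(B^{*})_{kj}<1$ for $j,k$ in distinct components (no critical cycle joins them): evaluating a putative relation $(B^{*})_{\cdot j}=\sum_{k\neq j}\alpha_{k}(B^{*})_{\cdot k}$ at rows $j$ and $k$ produces $(B^{*})_{jk}(B^{*})_{kj}\geq 1$, a contradiction.

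I expect the main obstacle to be part a), and within it the passage from the support step to the \emph{exact} reconstruction of $v$: one must upgrade the multiplicative tightness along eigenvector-optimal paths into a genuine domination $(B^{*})_{\cdot j}\leq\alpha(B^{*})_{\cdot\ell}$ by critical columns, with the scalars controlled so that the finite combination over $T(A,\lambda)$ reproduces $v$ exactly rather than merely bounding it from above. Once this absorption is secured, parts b) and c) reduce to bookkeeping on the entries of $B^{*}$.
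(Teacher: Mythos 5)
The paper itself does not prove Theorem~\ref{t:red-spec}: it is recalled from \cite{bapatfirst,gaubertthesis,PBbook}, so your attempt can only be judged against the standard argument --- which is in fact the route you take (localize to $B:=A_{J_{\lambda}J_{\lambda}}/\lambda$, write $w=B^*w=\sum_j w_j(B^*)_{\cdot j}$, absorb non-critical columns along tight paths, and derive b) and c) from two-sided concatenation inequalities). Moreover, the step you single out as the main obstacle, the exact absorption, is actually sound as you sketched it: if a tight path of weight $P$ runs from a non-critical $j\in\supp(w)$ to a critical $\ell\in\supp(w)$, then $w_j=Pw_{\ell}$ and $(B^*)_{kj}P\leq(B^*)_{k\ell}$ for every $k$, whence $w_j(B^*)_{\cdot j}\leq w_{\ell}(B^*)_{\cdot \ell}$ componentwise, and deleting dominated terms does not change a max-sum.

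The genuine gap is earlier, in the support step of a). You assert that the tight cycle of geometric mean $\lambda$ ``attains $\lambda$ (lies in $C_A(\lambda)$)'', invoking Theorem~\ref{spectral}; but $C_A(\lambda)$ is by definition confined to $A_{J_{\lambda}J_{\lambda}}$, and a cycle of mean $\lambda$ need not lie in $J_{\lambda}$ even when $\lambda\in\Lambda(A)$. Take $n=3$ with $a_{11}=1$, $a_{21}=1$, $a_{22}=2$, $a_{32}=1$, $a_{33}=1$ and all other entries $0$: then $\Lambda(A)=\{1,2\}$, and for $\lambda=1$ the only spectral class is $N_3=\{3\}$, so $J_{1}=\{3\}$, yet the loop at node $1$ is a cycle of mean $1$ lying outside $J_1$. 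Of course no eigenvector with eigenvalue $1$ is supported at node $1$, but that is exactly what must be proved, and Theorem~\ref{spectral} --- a statement about which $\rho_j$ are eigenvalues, not about supports --- cannot deliver it. The standard repair needs three observations: (i) an eigenvector is strictly positive on every class its support meets (chase $a_{ij}v_j\leq\lambda v_i$ backward along paths inside the class); (ii) no arc enters $\supp(v)$ from outside, since $i\notin\supp(v)$ gives $0=\lambda v_i=(Av)_i\geq a_{ij}v_j$, hence $a_{ij}=0$ for all $j\in\supp(v)$; consequently every class with access to the cycle's class $N_m$ lies inside $\supp(v)$; and (iii) Lemma~\ref{wellknown} applied to each such class bounds its Perron root by $\lambda$, so $N_m$ is spectral with $\rho_m=\lambda$, and $\supp(v)\subseteq J_{\lambda}$ follows. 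Two smaller loose ends are of the same nature: membership of $j,k$ in one component of $C_A(\lambda)$ does not mean they lie on one critical cycle, so both your claim $(B^*)_{jk}(B^*)_{kj}=1$ in b) and the strict inequality $(B^*)_{jk}(B^*)_{kj}<1$ across components in c) require the lemma that any cycle composed of critical edges is itself critical (all critical edges are tight for any positive $x$ with $Bx\leq x$); and c) also requires checking that the generators belong to $V(A,\lambda)$, i.e.\ $B(B^*)_{\cdot j}=(B^*)_{\cdot j}$ for critical $j$, which is one line: the critical cycle through $j$ gives $(BB^*)_{jj}\geq 1=(B^*)_{jj}$.
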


\begin{remark}
%\label{r:FV}
{\rm An analogous description of $V(A,\lambda)$ in
nonnegative matrix algebra is called Frobenius-Victory theorem
\cite{Fro-1912,Vic-85}, see \cite{Sch-86} Theorem 3.7. Namely, to
each spectral node of $R(A)$ with eigenvalue $\lambda$, there
corresponds a unique eigenvector, with support equal to the union of
classes having access to the spectral node. These eigenvectors are
the extreme rays of the cone, i.e. they form a "basis" in analogy
with Theorem~\ref{t:red-spec}.

Moreover, these extreme rays are linearly independent as it may be
deduced from their supports. In parallel, it can be shown that the
generators of Theorem~\ref{t:red-spec} are strongly regular,
see~\cite{But-03} for definition (i.e., independent in a stronger
sense).

However, extremals in the nonnegative case do not come from
$A^*=(I-A)^{-1}$ and, to the authors' knowledge, no explicit
algebraic expression for these vectors is known.}
\end{remark}

\section{Theory of Z-matrix equations}

\subsection{General results}
\label{ss:general}

In the following, we describe the solution set of $x=Ax+b$ as
combinations of the least solution $A^{\ast}b$ and the eigenvectors
of $A$. The results of this subsection hold for the max-times
algebra, nonnegative linear algebra, and an abstract algebraic setup
(reassuring that $A^*b$ satisfies $x=Ax+b$, and $v=\inf\limits_k A^k
x$, for $x$ such that $Ax\leq x$, satisfies $Av=v$), which will be
provided in Section~\ref{s:algen}.

We start with a well-known fact, that
\begin{equation}
\label{bellman-def} A^{\ast}b:=\sup_k(b+Ab+A^2b+\ldots+A^{k-1}b)
\end{equation}
is the least solution to $x=Ax+b$. We will formulate it in the form
of an equivalence. Note that the supremum~(\ref{bellman-def}) may
exist even if $A^*$ does not exist. (In this sense, $A^*b$ is rather
a symbol than a result of matrix-vector multiplication. On the other
hand, one can complete a semiring with the greatest element
"$+\infty$" and regard $A^*b$ as a matrix-vector product.)

\begin{theorem}[(Well-known, cf.~\cite{BCOQ,GM:08})]
\label{t:equivalence}Let $A\in \semr^{n\times n},b\in \semr^{n}$.
The following are equivalent:
\begin{enumerate}
\item[(i)] $x=Ax+b$ has a solution,
\item[(ii)] $x=Ax+b$ has a least solution.
\item[(iii)] $A^{\ast}b$ converges.
\end{enumerate}
If any of the equivalent statements holds, $A^{\ast}b$ is the least
solution of $x=Ax+b$.
\end{theorem}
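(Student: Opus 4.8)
The plan is to prove the cycle of implications $(iii)\Rightarrow(ii)\Rightarrow(i)\Rightarrow(iii)$, since $(ii)\Rightarrow(i)$ is trivial (a least solution is a solution). The conceptual core is the claim, asserted in the setup, that the truncated sums $s_k := b+Ab+\cdots+A^{k-1}b$ form a nondecreasing sequence whose supremum, when it exists, is a solution, and moreover the least one.

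Let me think about what I actually need to verify. First, the partial sums are nondecreasing: $s_{k+1} = s_k + A^k b \geq s_k$, using that addition is nondecreasing (this is exactly the property invoked after equation (closures)). So the sequence $\{s_k\}$ is monotone, and $(iii)$ says its supremum $A^*b$ exists (is finite). Assuming it exists, I'd verify it solves the equation. Compute $A(A^*b)+b$. Using distributivity and continuity of the operations with respect to suprema of nondecreasing sequences — which is a property I must either assume holds in $\semr$ (it does in both max-times and nonnegative algebra, and should be part of the abstract axioms in Section 4) — I get $A(\sup_k s_k) + b = \sup_k(A s_k) + b = \sup_k(A s_k + b)$. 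Now $A s_k + b = b + Ab + \cdots + A^k b = s_{k+1}$, so $\sup_k(A s_k + b) = \sup_k s_{k+1} = \sup_k s_k = A^*b$. This gives $(iii)\Rightarrow$ "$A^*b$ is a solution", proving both $(ii)$ (modulo leastness) and the final sentence.

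For leastness, and for $(i)\Rightarrow(iii)$, the key tool is an induction showing that any solution dominates every partial sum. Suppose $x$ satisfies $x=Ax+b$. Then $x = Ax+b \geq b = s_1$. Inductively, if $x \geq s_k$, then $x = Ax+b \geq A s_k + b = s_{k+1}$, again using monotonicity of addition and multiplication. Hence $x \geq s_k$ for all $k$, so $x \geq \sup_k s_k$. This simultaneously shows that if a solution exists then the partial sums are bounded above by it, and in the relevant ordered algebraic setting a bounded nondecreasing sequence has a supremum — giving $(i)\Rightarrow(iii)$ — and that $A^*b$, being this supremum, is a lower bound for every solution, i.e. the least solution, completing $(iii)\Rightarrow(ii)$ and the final claim.

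The main obstacle is not combinatorial but the order-theoretic continuity I am relying on: passing the supremum through the maps $x\mapsto Ax$ and $x\mapsto x+b$, and guaranteeing that a nondecreasing bounded sequence has a supremum that the algebra respects. In max-times algebra this is immediate (suprema are pointwise maxima and multiplication is continuous), and in classical nonnegative algebra the partial sums are increasing real sequences so boundedness yields convergence to the supremum, with $A$ and $+b$ continuous in the usual topology; so in both leading examples the argument goes through cleanly. For the abstract semiring $\semr$ of Section~\ref{s:algen} I would simply flag that these continuity and completeness properties are exactly what must be postulated, and that the proof above is then formal. I would also remark that ``converges'' in $(iii)$ should be read as ``the supremum exists (is finite)'', reconciling the two algebras, since the parenthetical comment in the text already warns that $A^*b$ may exist even when $A^*$ does not.
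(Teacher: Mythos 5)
Your proof is correct and takes essentially the same approach as the paper: the same cycle (i)$\Rightarrow$(iii)$\Rightarrow$(ii)$\Rightarrow$(i), the same computation pushing $A$ and $+b$ through the supremum of the partial sums $s_k$ to show $A^*b$ is a solution, and the same domination argument supplying both the boundedness needed for (i)$\Rightarrow$(iii) and the leastness of $A^*b$. The only cosmetic difference is that you obtain $x\geq s_k$ by induction using monotonicity of the operations, whereas the paper unfolds the equation into the identity $x=A^k x+\left(A^{k-1}+\cdots+I\right)b$ and then invokes only the nondecreasing property of addition.
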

\begin{proof}
(i)$\Rightarrow$ (iii) Let $x$ be a solution to $x=Ax+b$. Then
\begin{eqnarray*}
x &=&Ax+b \\
&=&A\left( Ax+b\right) +b \\
&=&A\left( A\left( Ax+b\right) +b\right) +b=....
\end{eqnarray*}%
Therefore for any $k\geq 1$ we have%
\begin{equation}
x=A^{k}x+\left( A^{k-1}+A^{k-2}+...+I\right) b. \label{x expressed}
\end{equation}
This shows that the expressions in~\eqref{bellman-def} are bounded
from above by $x$, hence the supremum exists.

(iii)$\Rightarrow$ (ii) We verify that
\begin{equation}
\label{Aastb}
\begin{split}
&AA^{\ast}b+b=A\sup_k(b+Ab+\ldots+A^{k-1}b)+b\\
&=\sup_k(b+Ab+\ldots+A^kb)=A^{\ast}b,
\end{split}
\end{equation}
treating sup as a limit and using the continuity of the
matrix-vector multiplication.

(From the algebraic point of view, we used the distributivity of
(max-algebraic, nonnegative) matrix multiplication with respect to
sup's of ascending chains, and the distributivity of $+$ with respect to such 
sup's. Further details on this will be given in Section~\ref{s:algen}.)

(ii)$\Rightarrow$ (i) Trivial.
\end{proof}

We proceed with characterizing the whole set of solutions. (See also
Remark~\ref{r:krivulin} for an alternative short proof of the first part.) 
\begin{theorem}
\label{Th main 1}Let $A\in \semr^{n\times n},b\in \semr^{n}$ be such
that~$x=Ax+b$ has a solution. Then
\begin{enumerate}
\item[(a)] The set of all solutions to $x=Ax+b$ is $\left\{
v+A^{\ast}b;Av= v\right\} $;
\item[(b)] If for any $x$ such that $Ax\leq x$
we have $\inf\limits_k A^k x =0$, then
 $A^{\ast }b$ is the unique solution to $x=Ax+b$.
\end{enumerate}
\end{theorem}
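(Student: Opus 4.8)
The plan is to prove both parts by leveraging the identity~\eqref{x expressed} established in the proof of Theorem~\ref{t:equivalence}, together with the fact (from that theorem) that $A^{\ast}b$ is itself a solution once a solution exists.

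For part (a), I would prove the two inclusions separately. First, to see that every vector of the form $v+A^{\ast}b$ with $Av=v$ is a solution, I substitute into the equation and compute $A(v+A^{\ast}b)+b = Av + (AA^{\ast}b+b) = v + A^{\ast}b$, where the first summand uses $Av=v$ and the second uses~\eqref{Aastb} from the previous proof. This direction is a routine calculation relying only on distributivity of matrix multiplication over addition. For the reverse inclusion, let $x$ be an arbitrary solution; I want to exhibit a vector $v$ with $Av=v$ and $x = v+A^{\ast}b$. The natural candidate is $v := \inf_k A^k x$, whose existence and fixed-point property $Av=v$ are exactly what the excerpt promises to establish in Section~\ref{s:algen} and may therefore be assumed. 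The main obstacle is then proving the decomposition $x = v + A^{\ast}b$; I expect to extract this by taking the limit (supremum in $k$) in~\eqref{x expressed}, writing $x = A^k x + (I+A+\ldots+A^{k-1})b$ and arguing that as $k\to\infty$ the second term ascends to $A^{\ast}b$ while the first descends to $v$. The delicate point is justifying that the limit of a sum equals the sum of the limits when one sequence is nondecreasing and the other nonincreasing; in the idempotent (max-times) case this follows because $A^{\ast}b$ is already the supremum and $x$ dominates each partial sum, so $x = v + A^{\ast}b$ can be checked directly, whereas in the general algebraic setting it requires the continuity/distributivity properties flagged for Section~\ref{s:algen}.

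For part (b), the hypothesis $\inf_k A^k x = 0$ for every $x$ with $Ax\leq x$ forces the eigencomponent $v$ in the decomposition to vanish. Concretely, any solution $x$ of $x=Ax+b$ satisfies $Ax \leq Ax+b = x$, so the hypothesis applies and gives $v = \inf_k A^k x = 0$. By part (a) this yields $x = v + A^{\ast}b = A^{\ast}b$, so $A^{\ast}b$ is the unique solution.

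The step I expect to be the genuine obstacle is the reverse inclusion in part (a), specifically the interchange of the limit with addition needed to split $x$ into its fixed-point part and its least-solution part. Everything else is either a direct substitution or an immediate consequence of the hypotheses; the subtlety lives entirely in the passage to the limit in~\eqref{x expressed}, which is precisely why the authors defer the supporting continuity statements to the abstract framework of Section~\ref{s:algen}.
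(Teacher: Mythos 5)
Your proposal is correct and follows essentially the same route as the paper: the same substitution argument for the forward inclusion, the same decomposition $x=A^kx+\left( A^{k-1}+\ldots+I\right)b$ from (\ref{x expressed}) with $v:=\inf_k A^k x$ for the reverse inclusion, and the same reduction of part (b) to showing $v=0$ for any solution. The one step you flag as an obstacle and defer is exactly what the paper carries out: in the general semiring setting it avoids any limit argument by proving the two inequalities $\inf_k\bigl(y^{(k)}+\sup_l z^{(l)}\bigr)\geq x\geq \sup_l\bigl(\inf_k y^{(k)}+z^{(l)}\bigr)$ directly from the distributivity axioms of Section~\ref{s:algen}, and it likewise derives $Av=v$ there from distributivity of multiplication over infima of descending chains rather than assuming it.
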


%
%\begin{remark}
%The vector $v$ in the theorem statement may be $0.$
%\end{remark}

\begin{proof} (a) Firstly we need to verify that any vector of the form
$v+A^{\ast}b$ where $v$ satisfies $Av=v$, solves~(\ref{Eq Ax+b=x}).
Indeed,
\begin{equation*}
A(v+A^{\ast}b)+b=Av+(AA^{\ast}b+b)=v+A^{\ast}b,
\end{equation*}
where we used that $Av=v$ and that $A^{\ast}b$ is a solution
of~(\ref{Eq Ax+b=x}), see Theorem~\ref{t:equivalence}. It remains to
deduce that each solution of~(\ref{Eq Ax+b=x}) is as defined above.

Let $x$ be a solution to (\ref{Eq Ax+b=x}), and denote
$y^{(k)}:=A^{k}x$ and $$z^{(k)}:=\left( A^{k-1}+A^{k-2}+...+I\right)
b.$$ We have seen in~(\ref{x expressed}) that 
\begin{equation}
\label{forallk}
x=y^{(k)}+z^{(k)},\quad \text{for all $k\geq 1$}.
\end{equation}
Since $Ax\leq x$ it follows that the sequence $y^{(k)}$ is nonincreasing.
The sequence of $z^{(k)}$ is nondecreasing. 

{\em Both in max-times and in the nonnegative case}, we conclude that
$v=\lim_{k\to\infty} y^{(k)}$ exists and (by the continuity of $A$ as operator) we have $Av=v$.
We also obtain that $A^*b=\lim_{k\to\infty} z^{(k)}$, and finally
\begin{equation}
x=\lim_{k\to\infty} y^{(k)}+\lim_{k\to\infty} z^{(k)}=v+ A^*b,
\end{equation}  
where $v$ satisfies $Av=v$. {\em The theorem is proved,} both for max-times algebra
and nonnegative linear algebra.

{\em In a more general semiring context (see Section~\ref{s:algen})},
it remains to show that
$\Tilde{x}:=\inf_k y^{(k)} + \sup_k z^{(k)}$ is the same as
$y^{(k)}+z^{(k)}$ for all $k$. After showing this we are done, since
$\sup\limits_k z^{(k)}=A^*b$, and also
\begin{equation}
\label{infdist-mat}
 A\inf_{k\geq 0} y^{(k)}=A(\inf_{k\geq 0} A^k x)= \inf_{k\geq
1} A^k x =\inf\limits_{k\geq 0} y^{(k)},
\end{equation}
so that we can set $v:=\inf_k y^{(k)}$, it satisfies $Av=v$. (From the algebraic point of view, we have used the distributivity
of matrix multiplication with respect to inf's of descending chains. Further details will
be given in Section~\ref{s:algen}.)

Using the distributivity of $+$ with
respect to $\inf$ we obtain
\begin{equation}
\label{xtildegeqx}
\Tilde{x}=\inf_k (y^{(k)}+\sup_l z^{(l)})\geq x,
\end{equation}
since this is true of any term in the brackets.  Using the
distributivity with respect to $\sup$ we obtain
\begin{equation}
\label{xtildeleqx}
\Tilde{x}=\sup_l (\inf_k y^{(k)}+ z^{(l)})\leq x,
\end{equation}
for analogous reason. Combining~\eqref{xtildegeqx} and~\eqref{xtildeleqx} we obtain
$$x=\Tilde{x}=\inf_k y^{(k)} + \sup_k z^{(k)}=v+A^{\ast}b,$$
which yields a {\bf general proof} of part (a). 

For part (b), recall that $y^{(k)}:=A^k x$,
and that $x$ satisfies $Ax\leq x$.
\end{proof}

These results also apply to equations $\lambda x= Ax+b$ when
$\lambda$ is invertible: it suffices to divide this equation by
$\lambda$.
\if{
\begin{remark}
{\rm Both in max-times algebra and in the classical nonnegative
algebra, condition $\rho(A)<1$ is sufficient for $\inf_k A^k x=0$
(for any $x$), hence it is also sufficient for the uniqueness of the
solution of $x=Ax+b$.}
\end{remark}
}\fi

\begin{remark}
\label{r:convexity} {\rm The solution set of $x=Ax+b$ is convex over
$\semr$, since it contains with any two points $x,y$ all convex
combinations $\lambda x+\mu y$, $\lambda+\mu=1$. Further, {\em both in
max-times semiring and in the nonnegative algebra},
$A^{\ast}b$ is the only extreme point: it cannot be a convex
combination of two solutions different from it. The eigenvectors of
$A$ are {\em recessive rays},i.e., any multiple of such vectors can
be added to any solution, and the result will be a solution again.
Moreover, only eigenvectors have this property. Indeed, assume that
$z$ is a solution, $z+\mu v$ where $\mu\neq 0$ satisfies
\begin{equation*}
z+\mu v = A(z+\mu v) +b,
\end{equation*}
but $Av\neq v$. In the usual algebra this is impossible. In
max-times, assume that $(Av)_i\neq v_i$ for some $i$, then one of
these is nonzero. As $z_i=(Az)_i+b_i$ is finite, taking large enough
$\mu$ will make $\mu v_i$ or $\mu (Av)_i$ the only maximum on both
l.h.s. and r.h.s., in which case $z+\mu v$ will not be a solution.
Thus, in both theories the eigencone of $A$ with eigenvalue $1$ is
the recessive cone of the solution set of $x=Ax+b$. In the max-times
case it is generated by the fundamental eigenvectors as in
Theorem~\ref{t:red-spec}. Thus we have an example of the tropical
theorem of Minkowski, representing closed max-times convex sets in
terms of extremal points and recessive rays, as proved by Gaubert
and Katz~\cite{GK-07}}.
\end{remark}

\begin{remark}
\label{r:krivulin} {\rm In this remark we recall the proof of Theorem~\ref{Th main 1} part a) given by
Krivulin, see~\cite{Kri-06} Lemma 7 or~\cite{Kri:09} Lemma 3.5. We slightly change
the original proof to make it work also for nonnegative linear algebra. Let $x$ be a solution of $x=Ax+b$ and
define $\underline{w}$ as the least vector $w$ satisfying $x=u+w$ where 
$u:=A^*b$. It can be defined explicitly by
\begin{equation}
\label{leastwdef}
\underline{w}_i=
\begin{cases}
x_i,& \text{if $x_i>u_i$},\\
0,& \text{if $x_i=u_i$},
\end{cases}
\; \text{or}\quad
\underline{w}_i=
\begin{cases}
x_i-u_i,& \text{if $x_i>u_i$},\\
0,& \text{if $x_i=u_i$},
\end{cases},
\end{equation}
in the case of max algebra and nonnegative linear algebra respectively.
Now notice that if $x=u+w$ then $x=u+Aw$. Indeed
\begin{equation*}
x=A(u+w)+b=(Au+b)+Aw=u+Aw.
\end{equation*}
Hence $\underline{w}\leq A\underline{w}$. Indeed, both $w:=\underline{w}$ and $w:=A\underline{w}$ satisfy
$x=u+w$ but $\underline{w}$ is the least such vector. Defining $\underline{v}:=\sup_{n\geq 1} A^n\underline{w}$
we obtain $x=u+\underline{v}$ and $A\underline{v}=\underline{v}$. 
The algebraic generality of this argument is also quite high, it will be discussed in the last
section of the paper.}
\end{remark}

\subsection{Spectral condition, Frobenius trace-down method}

We consider equation~$\lambda x=Ax+b$ in {\bf max-times algebra and nonnegative linear algebra},
starting with the case when $A$ is irreducible.  Theorem~\ref{Th
main 2} below can be also viewed in nonnegative linear algebra, but
only after some modification which will be described. Denote
$A_{\lambda}:=A/\lambda$ and $b_{\lambda}:=b/\lambda$.

The following is a {\bf max-times} version of the Collatz-Wielandt
identity in the Perron-Frobenius theory.

\begin{lemma}[(Well-known, cf.~\cite{PBbook,gaubertthesis})]
\label{wellknown}Let $A\in \mathbb{R}_{+}^{n\times n},A\neq 0.$ Then $%
Ax\leq \lambda x$ has a solution $x>0$ if and only if $\lambda \geq
\rho \left( A\right) ,\lambda >0.$
\end{lemma}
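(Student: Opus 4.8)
The plan is to prove the Collatz--Wielandt--type statement
\begin{equation*}
\exists\, x>0:\ Ax\leq\lambda x\quad\Longleftrightarrow\quad \lambda\geq\rho(A),\ \lambda>0,
\end{equation*}
by treating the two implications separately, using the maximum geometric cycle mean characterization of $\rho(A)$ recalled in Section~\ref{Sec Prereq}. The hypothesis $A\neq 0$ (together with irreducibility being the intended setting) ensures $\rho(A)>0$, so the requirement $\lambda>0$ is forced on the right-hand side and is not a vacuous add-on. I would first dispose of the direction $(\Leftarrow)$, which is essentially the statement that a positive subeigenvector exists whenever $\lambda\geq\rho(A)$. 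Here the natural candidate is built from the Kleene star: since $\rho(A_\lambda)=\rho(A)/\lambda\leq 1$ when $\lambda\geq\rho(A)$, the series $A_\lambda^{\ast}=I+A_\lambda+A_\lambda^2+\cdots$ converges in max-times algebra (convergence criterion $\rho\leq 1$, as stated in the Prerequisites). Taking any column of $A_\lambda^{\ast}$, or the vector $x=A_\lambda^{\ast}\mathbf 1$, gives a vector satisfying $A_\lambda x\leq x$, i.e. $Ax\leq\lambda x$; irreducibility of $A$ then guarantees this $x$ is strictly positive, since every node accesses every other node and so every coordinate of $A_\lambda^{\ast}\mathbf 1$ picks up a nonzero path weight.

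For the forward direction $(\Rightarrow)$, suppose $x>0$ satisfies $Ax\leq\lambda x$. First, $\lambda>0$ is immediate: if $\lambda=0$ then $Ax=0$ with $x>0$ forces $A=0$, contradicting $A\neq 0$. The substance is the inequality $\lambda\geq\rho(A)$, and the plan is to read it off from an arbitrary cycle. Take any cycle $i_1\to i_2\to\cdots\to i_k\to i_1$ in $D_A$. From $Ax\leq\lambda x$ we have the coordinatewise (max-times) inequalities $a_{i_1 i_2}x_{i_2}\leq\lambda x_{i_1}$, $a_{i_2 i_3}x_{i_3}\leq\lambda x_{i_2}$, and so on around the cycle, since each such term is dominated by the corresponding max $(Ax)_{i_j}$. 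Multiplying these $k$ inequalities together and using $x>0$ to cancel the product $x_{i_1}x_{i_2}\cdots x_{i_k}$ from both sides yields
\begin{equation*}
a_{i_1 i_2}a_{i_2 i_3}\cdots a_{i_k i_1}\leq\lambda^{k},
\end{equation*}
hence the geometric mean $\sqrt[k]{a_{i_1 i_2}\cdots a_{i_k i_1}}\leq\lambda$. Taking the maximum over all cycles gives $\rho(A)\leq\lambda$.

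The step I expect to require the most care is the telescoping cancellation in the forward direction: it is precisely the positivity hypothesis $x>0$ that licenses dividing out the coordinates, and one must be sure every factor $x_{i_j}$ is nonzero and finite so the cancellation is valid in the max-times semiring. I would state explicitly that the product of the componentwise inequalities is legitimate because addition is the max and the multiplication is the usual one, so each selected arc term is below $(Ax)_{i_j}\le\lambda x_{i_j}$; the inequality survives multiplication since all quantities are nonnegative reals. A minor secondary point is justifying convergence and positivity of $A_\lambda^{\ast}$ in the $(\Leftarrow)$ direction at the boundary value $\lambda=\rho(A)$, where $\rho(A_\lambda)=1$; this is exactly the case covered by the max-times convergence criterion $\rho\leq 1$ quoted earlier, and irreducibility (every coordinate of any star-based vector is strictly positive) closes the argument.
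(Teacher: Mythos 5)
Your overall strategy coincides with the paper's own proof: the forward direction multiplies the componentwise inequalities $a_{i_ji_{j+1}}x_{i_{j+1}}\leq\lambda x_{i_j}$ around an arbitrary cycle and cancels the positive coordinates of $x$ to bound the geometric cycle mean by $\lambda$, and the backward direction manufactures a positive subeigenvector from the Kleene star $A_{\lambda}^{\ast}$. Your forward argument is correct as written (one cosmetic remark: when $D_A$ is acyclic there are no cycles to multiply around, but then $\rho(A)=0\leq\lambda$ holds trivially once $\lambda>0$ is established; the paper disposes of this case with the sentence ``if $\rho(A)=0$ there is nothing to prove'').

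The genuine defect is in the backward direction. You justify positivity of $x=A_{\lambda}^{\ast}\mathbf{1}$ by irreducibility (``every node accesses every other node''), and you declare irreducibility to be ``the intended setting''. But the lemma is stated for arbitrary $A\neq 0$, possibly reducible; indeed $A\neq 0$ does not even imply $\rho(A)>0$ (take $A$ strictly triangular with one nonzero entry), so your opening claim that $\rho(A)>0$ is forced is false in the lemma's actual generality, and $\lambda>0$ is a genuine extra hypothesis there, not a redundancy. Moreover, for reducible $A$ your alternative suggestion of ``taking any column of $A_{\lambda}^{\ast}$'' fails outright: column $j$ of $A_{\lambda}^{\ast}$ is supported exactly on the nodes that access $j$, so it need not be positive. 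The appeal to irreducibility is also unnecessary: since $A_{\lambda}^{\ast}\geq I$, the paper simply takes $x=A_{\lambda}^{\ast}u$ for an arbitrary positive vector $u$ and gets $x\geq u>0$ for free, together with $A_{\lambda}x=A_{\lambda}A_{\lambda}^{\ast}u\leq A_{\lambda}^{\ast}u=x$ (using the truncation $A_{\lambda}^{\ast}=I+A_{\lambda}+\dots+A_{\lambda}^{k}$ valid for $\rho(A_{\lambda})\leq 1$). Replacing your access-based positivity argument by this one-line observation repairs the proof and restores the full generality of the statement.
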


\begin{proof}
Let $x>0$ be a solution, then $Ax\neq 0$ and so $\lambda >0.$ If
$\rho \left( A\right) =0$ there is nothing to prove, so we may
suppose $\rho \left( A\right) >0.$ Let $\sigma =\left(
i_{1},...,i_{k},i_{k+1}=i_{1}\right) $ be any cycle with nonzero
weight. Then
\begin{eqnarray*}
a_{i_{1}i_{2}}x_{i_{2}} &\leq &\lambda x_{i_{1}} \\
a_{i_{2}i_{3}}x_{i_{3}} &\leq &\lambda x_{i_{2}} \\
&&... \\
a_{i_{k}i_{1}}x_{i_{1}} &\leq &\lambda x_{i_{k}}.
\end{eqnarray*}%
After multiplying out and simplification we get $\lambda \geq \sqrt[k]{%
a_{i_{1}i_{2}}a_{i_{2}i_{3}}...a_{i_{k}i_{1}}}$ and so $\lambda \geq
\rho \left( A\right) .$

Suppose now $\lambda \geq \rho \left( A\right) ,\lambda >0.$ Then
$\rho \left( A_{\lambda }\right) \leq 1$ and so $A_{\lambda }^{\ast
}=I+A_{\lambda }+...+A_{\lambda }^{k}$ for every $k\geq n-1,$
yielding $A_{\lambda }A_{\lambda }^{\ast }\leq A_{\lambda }^{\ast
}.$ Let $u$ be any positive vector in $\mathbb{R}_{+}^{n}.$ Take
$x=A_{\lambda }^{\ast }u,$ then $x>0$ because $A_{\lambda }^{\ast
}u\geq u$ and
\begin{equation*}
A_{\lambda }x=A_{\lambda }A_{\lambda }^{\ast }u\leq A_{\lambda
}^{\ast }u=x.
\end{equation*}
\end{proof}

\begin{lemma}[(Well-known, cf.~\cite{BCOQ, Ostrowski})]
\label{oracle} If $A\in \mathbb{R}_{+}^{n\times n}$
is irreducible, $b\in \mathbb{R}_{+}^{n},b\neq 0$ and $\lambda >0$ then the
following are equivalent:
\begin{enumerate}
\item[(i)] $\lambda x=Ax+b$ has a solution.
\item[(ii)] $A_{\lambda}^*$ converges.
\item[(iii)] $\lambda\geq\rho(A)$ (max-times algebra), 
$\lambda>\rho(A)$ (nonnegative linear algebra). 
\end{enumerate}
All solutions of $\lambda x= Ax+b$ (if any) are positive.
\end{lemma}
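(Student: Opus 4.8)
The plan is to first use invertibility of $\lambda>0$ to divide $\lambda x=Ax+b$ by $\lambda$, reducing it to the normalized equation $x=A_\lambda x+b_\lambda$, and then to prove the cycle of implications (i)$\Rightarrow$(iii)$\Rightarrow$(ii)$\Rightarrow$(i), with the positivity claim established as a by-product along the way. The two easy arcs rest on material already at hand. For (iii)$\Rightarrow$(ii) I would note that the Perron root is homogeneous of degree one in both algebras, so $\rho(A_\lambda)=\rho(A)/\lambda$; hence $\lambda\ge\rho(A)$ is the same as $\rho(A_\lambda)\le 1$ and $\lambda>\rho(A)$ the same as $\rho(A_\lambda)<1$, which are exactly the convergence thresholds for the Kleene star recalled in Section~\ref{Sec Prereq}. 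For (ii)$\Rightarrow$(i) I would observe that finiteness of all entries of $A_\lambda^*$ forces finiteness of the product $A_\lambda^*b_\lambda$, which by Theorem~\ref{t:equivalence} applied to the pair $(A_\lambda,b_\lambda)$ is then the least solution.

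The substantive arc is (i)$\Rightarrow$(iii), and I would open it by proving that every solution is positive, since that is what allows Collatz--Wielandt-type bounds to bite. Given a solution $x\ge 0$, the identity $x=A_\lambda x+b_\lambda$ yields $x\ge b_\lambda$ and, upon iterating, $x\ge A_\lambda^k b_\lambda$ for every $k$. Because $A$ is irreducible its digraph $D_A$ is strongly connected, so for each index $i$ there is a path from $i$ to some $j\in\supp(b)$; this makes $(A_\lambda^k)_{ij}>0$ for a suitable $k$, whence $(A_\lambda^k b_\lambda)_i>0$ and therefore $x_i>0$. Thus $x>0$, which simultaneously settles the closing assertion of the lemma. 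With $x>0$ and $Ax\le\lambda x$ in hand, the max-times half of (iii) drops out immediately from Lemma~\ref{wellknown}, giving $\lambda\ge\rho(A)$.

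The step I expect to be the real obstacle is upgrading this to the strict inequality $\lambda>\rho(A)$ in the nonnegative case, since there $Ax\le\lambda x$ alone still only yields $\lambda\ge\rho(A)$. The plan is to extract strictness from the fact that the inequality $\lambda x_i>(Ax)_i$ is strict precisely on $\supp(b)$, using a tool from outside the max-times development: by the classical Perron--Frobenius theorem the irreducible nonnegative matrix $A$ admits a positive left eigenvector $y$ with $y^T A=\rho(A)\,y^T$ (the case $n=1$ being immediate). Multiplying $Ax=\lambda x-b$ on the left by $y^T$ gives $\rho(A)\,y^T x=\lambda\,y^T x-y^T b$, that is $(\lambda-\rho(A))\,y^T x=y^T b$. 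Here $y^T x>0$ because $y>0$ and $x>0$, while $y^T b>0$ because $b\ge 0$, $b\neq 0$; hence $\lambda-\rho(A)>0$. This is the single point at which the two algebras genuinely diverge, and it closes the cycle and completes the proof.
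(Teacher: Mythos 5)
Your proof is correct, but it is organized differently from the paper's and is more self-contained in two places. The paper proves only the equivalence (i)$\Leftrightarrow$(iii) in max-times algebra, citing the literature for (ii)$\Leftrightarrow$(iii) and deferring the entire nonnegative-algebra case to Ostrowski and Schneider; you instead run the full cycle (i)$\Rightarrow$(iii)$\Rightarrow$(ii)$\Rightarrow$(i) uniformly in both algebras. The genuine differences are two. First, positivity of solutions: the paper disposes of $n=1$, then normalizes $B=A/\rho(A)$, invokes convergence and strict positivity of $B^*$ for an irreducible matrix with $\rho(B)=1$, and bounds $B^*x$ by a multiple of $x$; your argument iterates $x\geq A_\lambda^k b_\lambda$ and uses strong connectivity to reach $\supp(b)$. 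Yours is more elementary, avoids division by $\rho(A)$, and works verbatim in both algebras (indeed in any semiring without zero divisors), whereas the paper's route is max-times-specific. Second, the strict inequality $\lambda>\rho(A)$ in the nonnegative case: the paper obtains this only by citation, while you prove it via the classical left Perron vector identity $(\lambda-\rho(A))\,y^Tx=y^Tb>0$, which is clean but imports Perron--Frobenius theory not otherwise developed in the paper. Your closing step in max-times --- applying Lemma~\ref{wellknown} once $x>0$ is established --- coincides with the paper's. Both routes reach the same conclusion; yours buys uniformity and self-containedness at the cost of a somewhat longer argument, while the paper's buys brevity by leaning on references.
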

\begin{proof}
In the case of nonnegative matrix algebra, this Lemma follows from the results of
Ostrowski's famous paper~\cite{Ostrowski} (where matrices appear as determinants),
see also~\cite{Sch-56}, Lemma 5. For the equivalence between (ii) and (iii) in max-times
algebra, consult e.g. \cite{BCOQ,PBbook,GM:08}. (Both in max-times algebra and in the nonnegative linear algebra,
such equivalence holds also for reducible matrices.) For the reader's convenience we show the
equivalence between (i) and (iii) in max-times algebra.

(iii)$\Rightarrow$(i): If $\lambda \geq \rho \left( A\right) $ then
$1\geq\rho(A_{\lambda})$, hence $A_{\lambda}^*$ and
$A_{\lambda}^*b_{\lambda}$ converge. In this case,
$A_{\lambda}^*b_{\lambda}$ is the least solution by
Theorem~\ref{t:equivalence}.

(i)$\Rightarrow$(iii): If $Ax+b=\lambda x$ then $\lambda >0$ and
$x\neq 0$ since $b\neq 0.$ We need to show that $x>0$, to apply
Lemma~\ref{wellknown}.

If $n=1$ then the result holds. Suppose now $n>1$, thus $\rho \left(
A\right)>0.$ Let $B=\left( \rho \left( A\right) \right) ^{-1}A$ and
$\mu =\left( \rho \left( A\right) \right) ^{-1}\lambda .$ Then $B$
has $\rho(B)=1$, it is irreducible and $Bx\leq \mu x.$ Therefore
$B^{*}>0,$ thus $B^{*}x>0$. But $B^*x\leq \mu x$ and hence $x>0$. By
Lemma~\ref{wellknown} we obtain that $\lambda\geq\rho(A)$.
\end{proof}

\begin{remark}
\label{Rem on infinity} {\rm Note that also for general (reducible)
$A$, if $b>0$ then for any solution $x$ of $\lambda x=Ax+b$ we have
$x>0,$ and hence $\lambda \geq \rho \left( A\right) $ by
Lemma~\ref{oracle}. However, this condition is not necessary for the
existence of a solution to $Ax+b=\lambda x,$ when $b$ has at least one zero component,
see Theorem \ref{Th main 2} below. If $%
\lambda <\rho(A)$ then some entries of $A_{\lambda }^{\ast }$ are
$+\infty $ and it is not obvious from Theorem \ref{Th main 1}
whether a finite solution exists since the product $A_{\lambda
}^{\ast }b_{\lambda }$
may in general (if $\lambda $ is too low) contain $+\infty $. However if $%
0.\left( +\infty \right) $ is defined as $0$ and the condition of
Theorem \ref{Th main 2} (iv) holds, then
the $+\infty $ entries of $A_{\lambda }^{\ast }$ in $%
A_{\lambda }^{\ast }b_{\lambda }$ will always be matched with zero components of $%
b_{\lambda },$ and consequently $A_{\lambda }^{\ast }b_{\lambda }$
will be a finite non-negative vector.}
\end{remark}

Now we consider the general (reducible) case. The next result
appears as the main result of this paper, describing the solution
sets to $Z$-equations in max-times algebra and nonnegative linear algebra. 

\begin{theorem}
\label{Th main 2} Let $A\in \mathbb{R}_{+}^{n\times n}$ be in FNF with classes $N_j, j=1,\ldots,s$. Let $b\in \mathbb{R%
}_{+}^{n},\lambda \geq 0,$. Denote $J=\left\{ j;N_j\accesses \operatorname{supp}%
\left( b\right) \right\}$ and $\overline{\rho }= \max_{j\in
J}\rho_{j}$ (for the case when $b=0$ and $J=\emptyset$ assume that
$\max\emptyset = 0$). The following are equivalent:
\begin{enumerate}
\item[(i)] System $\lambda x=Ax+b$ has a solution.
\item[(ii)] System $\lambda x=Ax+b$ has the least solution.
\item[(iii)] $x^{(0)}=A_{\lambda}^{\ast}b_{\lambda}$ converges.
\item[(iv)] If $j\in J$ then $(A_{jj})^*_{\lambda}$ converges.
\item[(v)] $\overline{\rho}\leq \lambda$ (max-times), or $\overline{\rho}<\lambda$ (nonnegative linear algebra)
\end{enumerate}
If any of the equivalent statements hold, then
\begin{enumerate}
\item[a)] $x^0$ is the least solution of $\lambda x=Ax+b$. For this
solution, $x^0_{N_i}\neq 0$ when $i\in J$ and $x^0_{N_i}=0$ when
$i\notin J$. The solution $x^0$ is unique if and only if $\lambda$
is not an eigenvalue of $A$.
\item[b)] Any solution $x$ of (\ref{Eq Ax+b=lx}) can be expressed as
$x=x^0+v$ where $v$ satisfies $Av=\lambda v$.
\end{enumerate}
\end{theorem}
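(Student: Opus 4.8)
The plan is to reduce everything to the normalized equation $x = A_\lambda x + b_\lambda$, which is legitimate for $\lambda>0$ (the degenerate case $\lambda=0$ forces $b=0$ for solvability and yields the trivial least solution $x^0=0$, consistent with $J=\emptyset$, $\overline{\rho}=0$, so I would dispatch it separately). Once normalized, the equivalences (i) $\iff$ (ii) $\iff$ (iii) are immediate from Theorem~\ref{t:equivalence} applied to $A_\lambda,b_\lambda$, which also identifies $x^0=A_\lambda^{*}b_\lambda$ as the least solution whenever it exists. Part b) is then nothing but Theorem~\ref{Th main 1}(a) applied to $A_\lambda$: the full solution set is $\{x^0+v : A_\lambda v=v\}=\{x^0+v : Av=\lambda v\}$.

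The core of the proof, and its only genuinely new content, is the chain (iii) $\iff$ (iv) $\iff$ (v) together with the support claim in a), all of which I would obtain by the Frobenius trace-down method. Since $A$ is in FNF it is block lower-triangular, so I would solve for the blocks in the order $i=1,\ldots,s$: writing the $i$-th block equation as $x_{N_i}=(A_\lambda)_{ii}x_{N_i}+c_i$ with $c_i:=(b_\lambda)_{N_i}+\sum_{j<i}(A_\lambda)_{ij}x^0_{N_j}$, the forcing term $c_i$ depends only on previously determined blocks. Each $A_{ii}$ is irreducible, so Lemma~\ref{oracle} governs this sub-equation: if $c_i\neq 0$ it is solvable iff $(A_{ii})^*_\lambda$ converges, and then its least solution $x^0_{N_i}=(A_{ii})^*_\lambda c_i$ is strictly positive; if $c_i=0$ then $x^0_{N_i}=0$ and no condition is imposed.

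The combinatorial heart is an induction on $i$ showing $c_i\neq 0 \iff i\in J$ (equivalently $x^0_{N_i}\neq 0 \iff i\in J$). In one direction, if $i\in J$ and $N_i$ does not already meet $\supp(b)$, then the path witnessing $N_i\accesses\supp(b)$ must leave the class through a first arc into some strictly lower block $N_{j_0}$ with $j_0\in J$; by induction $x^0_{N_{j_0}}>0$, and since $(A_\lambda)_{ij_0}\neq 0$ the product $(A_\lambda)_{ij_0}x^0_{N_{j_0}}$ is nonzero (here the strict positivity from Lemma~\ref{oracle} is exactly what prevents a support mismatch from killing the product), so $c_i\neq 0$; the converse reads the definition of $c_i$ backwards. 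This establishes the support statement in a), and simultaneously shows that $x^0=A_\lambda^{*}b_\lambda$ converges to a finite vector iff the only blocks that can obstruct convergence, namely those with $c_i\neq 0$, i.e.\ $i\in J$, have convergent $(A_{ii})^*_\lambda$; processing the blocks in order and propagating finiteness (resp.\ an infinite entry via Lemma~\ref{oracle} at the first offending $i\in J$) gives (iii) $\iff$ (iv). Finally (iv) $\iff$ (v) is the irreducible convergence criterion recalled in Section~\ref{Sec Prereq}: $(A_{ii})^*_\lambda$ converges iff $\rho_i/\lambda\leq 1$ in max-times (resp.\ $<1$ in nonnegative algebra), and quantifying over $i\in J$ turns this into $\overline{\rho}\leq\lambda$ (resp.\ $<$).

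It remains to settle the uniqueness clause of a). By part b) the solution set is $\{x^0+v : Av=\lambda v\}$, so uniqueness is equivalent to $v=0$ being the only eigenvector, i.e.\ $\lambda\notin\Lambda(A)$. The only point needing care is that a nonzero eigenvector actually produces a solution different from $x^0$: since $x^0$ is finite and eigenvectors form a cone, I would scale any nonzero $v$ so that $(\alpha v)_i$ exceeds $x^0_i$ on the nonempty support of $v$, forcing $x^0+\alpha v\neq x^0$. I expect the main obstacle to be the bookkeeping in the trace-down induction, namely correctly matching the access relation to the non-vanishing of $c_i$ and propagating finiteness and infiniteness block by block, rather than any single hard estimate; the strict positivity supplied by Lemma~\ref{oracle} is the lever that makes the support characterization clean.
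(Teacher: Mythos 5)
Your proposal is correct and follows essentially the same route as the paper: the degenerate case dispatched first, (i)$\Leftrightarrow$(ii)$\Leftrightarrow$(iii) via Theorem~\ref{t:equivalence}, part b) via Theorem~\ref{Th main 1}, the Frobenius trace-down induction with Lemma~\ref{oracle} (using the strict positivity of each resolved block to keep the forcing terms nonzero) to link (iii)/(i) with (iv), the support claim, and (iv)$\Leftrightarrow$(v), and finally the same scaling argument for the uniqueness criterion. The only difference is organizational: you run the trace-down over all blocks of the least solution, tracking $c_i\neq 0\Leftrightarrow i\in J$, whereas the paper splits indices into $I$ and $J$ (noting $A_{IJ}=0$), runs the induction on the $J$-blocks of an arbitrary solution, and gets (iv)$\Rightarrow$(i) by setting $x_I=0$; the substance is the same.
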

\begin{proof} We first treat the trivial case, when $b=0$. In this case $x^0=0$ is
a solution, $J=\emptyset ,\overline{\rho }=0\leq\lambda$ and thus
all the equivalent statements (i)-(iv) are true; (a) and (b) hold
trivially with $x^0=0.$

We now suppose $b\neq 0.$ Consequently, $\lambda >0,$ and assume
w.l.o.g. that $\lambda=1$. The equivalence of (i)-(iii) was
manifested in Theorem~\ref{t:equivalence}, and part b) was proved in
Theorem~\ref{Th main 1}. The equivalence of (iv) and (v) follows from Lemma~\ref{oracle}.
It remains to show the equivalence of (i)
and (iv), that the minimal solution has a prescribed support, and
the spectral criterion for uniqueness.

We show that (i) implies (iv). For simplicity we use the same symbol
"$J$" for the set of indices in the classes of $J$. Denote
$I:=\{1,\ldots,n\}\backslash J$. We have
\begin{equation}
\label{e:IJ}
\begin{pmatrix}
x_I\\
x_J
\end{pmatrix}=
\begin{pmatrix}
A_{II} & 0\\
A_{JI} & A_{JJ}
\end{pmatrix}
\begin{pmatrix}
x_I\\
x_J
\end{pmatrix}
+
\begin{pmatrix}
0\\
b_J
\end{pmatrix},
\end{equation}
and hence $x_I$ is a solution of $A_{II}x_I=x_I$, and $x_J$ is a
solution of  $x_J=A_{JJ}x_J+A_{JI}x_I+b_J$. Further, denote
$\Tilde{b}_J:=A_{JI}x_I+b_J$ and let $J$ consist of the classes
$N_1,\ldots,N_t$ ($t\leq s$). Then
\begin{equation*}
A(J)=\left(
\begin{tabular}{cccc}
$A_{11}$ & 0 & 0 & 0 \\
$A_{21}$ & $A_{22}$ & 0 & 0 \\
$\cdots $ & $\cdots $ & $\ddots $ & 0 \\
$A_{t1}$ & $A_{t2}$ & $\cdots $ & $A_{tt}$%
\end{tabular}%
\right) .
\end{equation*}%
%where we write $A_{ij}$ for $A_{N_iN_j}$ for the sake of brevity.

We now proceed by an inductive argument, showing that $(A_{jj})^*$ converges
for all $j=1,\ldots,t$, and that all components in
$x_{N_1},\ldots,x_{N_t}$ are positive. This argument is a
max-algebraic version of the {\bf Frobenius trace-down method}.

As the {\bf base of induction}, we have $x_{N_1}= A_{11}
x_{N_1}+ \Tilde{b}_{N_1}$. In this case, the class $N_1$ is final,
so $b_{N_1}$ and hence $\Tilde{b}_{N_1}$ should have some positive
components. Using Lemma~\ref{oracle}, we conclude that $(A_{11})^*$ 
converges and $x_{N_1}$ is positive.

{\bf Induction step.} Suppose that for some $l$, all components of
$(x_{N_1},\ldots,x_{N_l})$ solving
\begin{equation}
\label{lchain}
\begin{split}
x_{N_1} & =  A_{11} x_{N_1} + \Tilde{b}_{N_1}\\
x_{N_2} & =  A_{21} x_{N_1} + A_{22} x_{N_2} + \Tilde{b}_{N_2}\\
\ldots & = \ldots\\
x_{N_l} & =  A_{l1} x_{N_1} +\ldots + A_{ll} x_{N_l} +
\Tilde{b}_{N_l},
\end{split}
\end{equation}
are positive. We show that the same holds if we add the next
equation
\begin{equation}
\label{lplus1} x_{N_{l+1}} = A_{l+1,1}x_{N_1}+\ldots+A_{l+1,l+1}
x_{N_{l+1}}+\Tilde{b}_{N_{l+1}},
\end{equation}
and that $(A_{l+1,l+1})^*$ converges. We have two cases: either $b_{N_{l+1}}$
has nonzero components so that $N_{l+1}$ intersects with $\supp(b)$,
or if not, $N_{l+1}$ should access a class which intersects with
$\supp(b)$. In this case, one of the submatrices $A_{l+1,1},\ldots,
A_{l+1,l}$ is not identically $0$. As all components of
$x_{N_1},\ldots, x_{N_l}$ are positive, this shows that the sum on
the r.h.s. of~\eqref{lplus1} excluding $A_{l+1,l+1}x_{N_{l+1}}$ has some
positive components in any case. Using Lemma~\ref{oracle}, we
conclude that $(A_{l+1,l+1})^*$ converges and $x_{N_{l+1}}$ is positive.

Now we show that (iv) implies (i), and moreover, that there is a
solution with prescribed support structure. To do so, we let $x_I=0$
in~\eqref{e:IJ}. Then it is reduced to~$x_J=A_{JJ}x_J+b_J$, and we
have to show the existence of a positive solution $x_J$. The proof
of this follows the lines of the Frobenius trace-down method
described above, making the inductive assumption that~(\ref{lchain})
has a positive solution $(x_{N_{1}},\ldots,x_{N_l})$ and using
Lemma~\ref{oracle} to show that~(\ref{lplus1}) can be solved with a
positive $x_{N_{l+1}}$. Strictly speaking, in this case we have $b$
instead of $\Tilde{b}$ in~(\ref{lchain}) and~(\ref{lplus1}), but
this does not make any change in the argument.

Let the conditions (i)-(v) be satisfied.
Since letting $x_I=0$ in~\eqref{e:IJ} produces a solution (see above), 
the support of the least solution is contained in
$J$. However, the support of {\em any} solution should contain $J$
by the argument in the proof of (i)$\Rightarrow$(iv).

Evidently, solution $x^0$ is unique if $\lambda$ is not an
eigenvalue of $A$. To show the converse (in max-times algebra), note
that for any nonzero $v$ there is a large enough $\alpha$ such that
some component $\alpha v_i$ is greater than $x^0_i$, hence
$x^0+\alpha v\neq x^0$. (Note that the converse would be evident in
the usual nonnegative algebra.)

The proof is complete.
\end{proof}

\begin{remark}[(cf \cite{BC-75,LMa-00})]
{\rm The Frobenius trace-down method of Theorem~\ref{Th main 2} can
be also viewed as a generalized block-triangular elimination
algorithm for obtaining the least solution $A^*b$ (assumed w.l.o.g.
that $\lambda=1$). Namely, if $(x_{N_1}\ldots x_{N_l})$ is the least
solution of~(\ref{lchain}), then computing
\begin{equation}
\label{lplus2} x_{N_{l+1}}:=
(A_{l+1,l+1})^{\ast}(A_{l+1,1}x_{N_1}+\ldots+A_{l+1,l}
x_{N_l}+b_{N_{l+1}})
\end{equation}
yields the least solution $(x_{N_1},\ldots,x_{N_l},x_{N_{l+1}})$ of
the enlarged system~(\ref{lchain}) \& (\ref{lplus1}) with $b$
instead of $\Tilde{b}$. Indeed, if we suppose that there is another
solution $(x'_{N_1},\ldots,x'_{N_{l+1}})$, then $x'_{N_i}\geq
x_{N_i}$, and it follows from~(\ref{lplus2}) that $x'_{N_{l+1}}\geq
x_{N_{l+1}}$. As an algorithm for finding the least solution of
$x=Ax+b$, it is valid even for more general semirings than the
setting of Section~\ref{s:algen}, provided that a solution to
$x=Ax+b$ exists.}
\end{remark}

\begin{remark}
{\rm The description of the support of $x^0$ as in Theorem~\ref{Th
main 2} a) can be obtained directly from the path interpretation of
$A^*$, using that $x^0=A^{\ast} b$ (when this is finite). Indeed,
write $b=\sum_{k\in\supp(b)} \beta_k e_k$ where $e_k$ is the $k$th
unit vector and $\beta_k$ are all positive. Hence $x^0=A^{\ast}b
=\sum_{k\in\supp(b)} \beta_k A^{\ast}_{\cdot k}$.  It can be now
deduced from the path interpretation of $A^{\ast}$, that $x^0_l>0$
whenever $l$ accesses $k$ from $\supp(b)$. This argument also shows
that the description of the support of $x^0=A^{\ast}b$ is valid over
any semiring with no zero divisors. With zero divisors, the access
condition for $x^0_l>0$ may be no longer sufficient.}
\end{remark}

We have represented any solution $x$ of $x=Ax+b$ in the form
$x^0+v$, where $x^0=A^*b$ and $v=\inf_k A^k x$. Below we give an
explicit formula for $v$ in the case of max-times algebra, due to
Dhingra and Gaubert~\cite{DG-06}. \if{ An explicit expression for
$v$ in the case of max-min algebra will be described in
Section~\ref{s:algen}.}\fi

Let $C$ be the set of critical nodes (i.e., nodes of the critical
graph) corresponding to the eigenvalue $1$ (see
Subsection~\ref{ss:sptheory}). For any critical cycle
$(i_1,\ldots,i_k)$ we either obtain $x_{i_l}=0$ for all
$l=1,\ldots,k$, or both $x_{i_l}\neq 0$ and
$a_{i_li_{l+1}}x_{i_{l+1}}=x_{i_l}$ for all $l$ (after multiplying
all inequalities $a_{i_li_{l+1}}x_{i_{l+1}}\leq x_{i_l}$ and
canceling $x_{i_1}\ldots x_{i_k}$ it turns out that any strict
inequality causes $a_{i_1i_2}\ldots a_{i_ki_1}<1$). This implies
$(Ax)_C=x_C$, for the critical subvectors of $Ax$ and $x$. Applying
$A$ to $x^{(k)}=A^{(k)}x$ which also satisfies $Ax^{(k)}\leq
x^{(k)}$ we obtain that $(A^kx)_C=x_C$ for any $k$, and hence also
$v_C=x_C$.

It remains to determine the non-critical part of $v$. For this we
expand the non-critical part of $Av=v$ as $v_N = A_{NC} v_C + A_{NN}
v_N$. Forming $A_{NN}$ corresponds to eliminating all spectral nodes
of eigenvalue $1$ from the reduced graph $R(A)$. The non-spectral
nodes with eigenvalue $1$ will remain non-spectral, hence $A_{NN}$
does not have eigenvalue $1$, and $(A_{NN})^*A_{NC}v_C$ is the only
solution. Combining with the previous argument we conclude that
\begin{equation}
\label{v expressed} v_C=x_C,\quad v_N=(A_{NN})^*A_{NC}x_C.
\end{equation}

\subsection{Max-times algebra and nonnegative linear algebra}
\label{ss:maxnonneg}

We can make further comparison with the survey of Schneider~\cite{Sch-86} Sect.~4, and
with the work of Hershkowitz and
Schneider~\cite{HSHershkowitz} describing solutions of Z-matrix
equations in nonnegative linear algebra. It can be seen that:

\begin{enumerate}

\item[1.] In the nonnegative linear algebra, Theorem~\ref{Th main 2} extends the statements of \cite{Sch-86} Theorem 4.3
and \cite{Sch-86} Theorem 4.12 (due to Victory~\cite{Vic-85}). In particular, it gives an explicit
formula for the least solution.  

\item[2.] Frobenius trace-down method is also used in the proof
of~\cite{HSHershkowitz} Proposition 3.6, and condition (v) of
Theorem~\ref{Th main 2} (with the strict inequality) is equivalent
to~\cite{HSHershkowitz} condition (3.12).

\item[3.] As observed in~\cite{HSHershkowitz} Theorem 3.16, in the
case of nonnegative algebra, $x^0$ is the only vector with support
$J$, because $\supp(b)$ cannot be accessed by the spectral classes
of $R(A)$ in the case of solvability. However, this is not the case
in the max-times algebra, making it possible that all spectral classes of $R(A)$ 
access $\supp(b)$. This condition is necessary and sufficient for all solutions of $\lambda x=Ax+b$ to 
have the same support as $A^*b$.

\if{
\item[3.] In the case of nonnegative algebra, the spectral nodes with
$\rho_i=\lambda$ are easily identified by the condition of
Theorem~\ref{spectral} (with strict inequality). However, unlike in
the case of max-times algebra, the corresponding Perron eigenvectors
may not generate the whole cone of nonnegative eigenvectors, because
the blocks with higher Perron roots may also have eigenvalue
$\lambda$. The corresponding eigenvectors have some negative
components, but they still contribute to the cone. The extremal rays
of the nonnegative eigenvector cone are the vectors of preferred
basis described in~\cite{HSHershkowitz} Theorem 3.20. }\fi

\item[4.] It can be observed that geometric and combinatorial
descriptions of the solution set in the usual nonnegative algebra,
as provided by~ \cite{HSHershkowitz} Theorem 3.20 and Corollary
3.21, can be deduced from~Theorem~\ref{Th
main 2}, with an application of
Frobenius-Victory theorem, see remark after Theorem~\ref{t:red-spec}. Max-times
analogues of these results of~\cite{HSHershkowitz} can be also
easily formulated.
\end{enumerate}

We next give an example illustrating similarity and difference
between the two theories. Let $A$ be the matrix
\[\begin{pmatrix}
1 &  0 &  0 &  0 &  0 &  0 &  0 \\
0 &  1 &  0 &  0 &  0 &  0 &  0 \\
1 &  0 &  1 &  0 &  0 &  0 &  0 \\
0 &  1 &  0 &  0 &  0 &  0 &  0 \\
0 &  1 &  0 &  0 &  0 &  0 &  0 \\
0 &  0 &  1 &  1 &  0 &  0 &  0 \\
0 &  0 &  0 &  0 &  1 &  0 &  2
\end{pmatrix} \]
We note that this matrix is essentially the same as
in~\cite{HSHershkowitz} Example 3.22, that is, we have replaced
$I-A$ by $A$ and its (reduced) graph $R(A)$, given below, differs
from the one in that Example 3.22 only by the markings of the nodes.

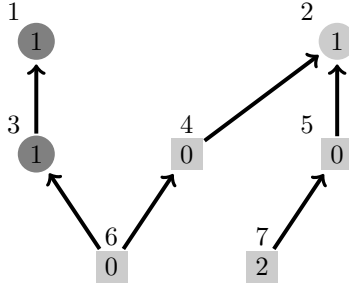
\begin{figure}[h]
\begin{center}
\begin{tikzpicture}[auto, line width=1.5pt, shorten >=1pt,->]

\tikzstyle{subspectral1}=[rectangle,fill=black!20,minimum
size=12pt,inner sep=1pt]
\tikzstyle{subspectral2}=[rectangle,fill=black!20,minimum
size=12pt,inner sep=1pt]

\tikzstyle{superspectral}=[rectangle,fill=black!20,minimum
size=12pt,inner sep=1pt]

\tikzstyle{spectral}=[circle,fill=black!50,minimum size=14pt,inner
sep=1pt]

\tikzstyle{pseudospectral}=[circle,fill=black!20,minimum
size=14pt,inner sep=1pt]

\node[spectral,xshift=0cm,yshift=0cm] (1) {$1$};
\draw[xshift=-0.3cm,yshift=0.4cm] node{1};

\node[spectral,xshift=0cm,yshift=-1.5cm] (3) {$1$};
\draw[xshift=-0.3cm,yshift=-1.1cm] node{3};

\node[pseudospectral,xshift=4cm,yshift=0cm] (2) {$1$};
\draw[xshift=3.6cm,yshift=0.4cm] node{2};

\node[subspectral1,xshift=1cm,yshift=-3cm] (6) {$0$};
\draw[xshift=1cm,yshift=-2.6cm] node{6};

\node[subspectral1,xshift=2cm,yshift=-1.5cm] (4) {$0$};
\draw[xshift=2cm,yshift=-1.1cm] node{4};

\node[subspectral2,xshift=4cm,yshift=-1.5cm] (5) {$0$};
\draw[xshift=3.6cm,yshift=-1.1cm] node{5};

\node[superspectral,xshift=3cm,yshift=-3cm] (7) {$2$};
\draw[xshift=3cm,yshift=-2.6cm] node{7};

\draw (3) to (1); \draw (6) to (3); \draw (6) to (4); \draw (4) to
(2); \draw (5) to (2); \draw (7) to (5);
\end{tikzpicture}
\end{center}
\caption{The marked (reduced) graph of matrix $A$. Circles correspond to the nodes with the
greatest Perron root $1$, the darker ones being spectral. Each node is marked by its Perron root (inside)
and by its number (outside).}
\end{figure}

Let $b \in  \R^7_+$. It follows from Theorem~\ref{Th main 2} (iv)
that there exists a solution $x$ to the max times equation $Ax+b =
x$  if and only if $\supp(b) \subseteq \{1,3,4,6\}$. In the usual
nonnegative algebra, the condition is more restrictive:
$\supp(b)\subseteq \{4,6\}$.

We choose
\[ b =
\begin{pmatrix}
0  & 0 & 0 & 1 & 0 & 0 & 0
\end{pmatrix}^T \]
as in \cite{HSHershkowitz} Example 3.22. Then  $\supp(b) = \{4\}$
and the minimal solution $x^0$ of $Ax+b =x$  has support $\{4,6\}$
and equals
\[x^0=
\begin{pmatrix}
0  & 0 & 0 & 1 & 0 & 1 & 0
\end{pmatrix}^T. \]
in both theories.

In max-times algebra, $\{1\}$ and $\{3\}$ are spectral nodes, and
the eigenvector cone for the eigenvalue $1$ is generated by
\[v^1=
\begin{pmatrix}
1& 0 & 1 & 0 & 0 & 1 & 0
\end{pmatrix}^T \]
and
\[v^2=
\begin{pmatrix}
0& 0 & 1 & 0 & 0 & 1 & 0
\end{pmatrix}^T, \]
see Theorems~\ref{spectral} and~\ref{t:red-spec}. In the usual
nonnegative algebra, $\{3\}$ is the only spectral node and any
eigenvector is a multiple of $v^2$.

In max-times algebra, the maximal support of a solution is
$\{1,3,4,6\}$. For example take
\[y^1 =
\begin{pmatrix}
2& 0 & 3 & 1 & 0 & 3 & 0
\end{pmatrix}^T, \]
the max-times "sum" of $x^0$, $2v^1$ and $3v^2$. In the usual
nonnegative algebra, the maximal support is $\{3,4,6\}$, take
\[y^2= x^0+v^2=
\begin{pmatrix}
0& 0 & 1 & 1 & 0 & 2 & 0
\end{pmatrix}^T,
\]
as in~\cite{HSHershkowitz}. Note that neither $y^1$ is a solution in
the usual sense, nor $y^2$ is a solution in the max-times sense.

Observe that for given $A,b$, if the usual Perron roots of all
blocks in FNF are the same as the max-times roots (as in the example
above), the existence of a solution in nonnegative algebra implies
the existence in max algebra (but not conversely). Examples of
vectors for which a solution exists with $A$ as above in max algebra
but not in nonnegative algebra are given by $v^1$ and $v^2$.

In the case of existence, minimal solutions have the same support in
both theories.

\section{Algebraic generalizations}
\label{s:algen}

Here we describe general setting in which
Theorems~\ref{t:equivalence} and~\ref{Th main 1} of
Subsection~\ref{ss:general} hold.

Recall that a set $\semr$ is called a {\em semiring} if it is
equipped with operations of addition $+$ and multiplication $\cdot$
satisfying the following laws:
\begin{enumerate}
\item[(a)] addition is commutative $a+b=b+a$ $\forall a,b\in\semr$,
\item[(b)] multiplication distributes over addition $a(b+c)=ab+ac$
$\forall a,b,c\in\semr$,
\item[(c)] both addition and multiplication
are associative: $a+(b+c)=(a+b)+c$, $a(bc)=(ab)c$ $\forall
a,b,c\in\semr$,
\item[(d)] there are elements $\bzero$ and $\bunity$
such that $a+\bzero=a$, $a\bunity=\bunity a=a$, and $a\bzero =\bzero
a=\bzero$ for all $a\in\semr$.
\end{enumerate}
The max-times algebra and the usual nonnegative algebra are
semirings (also note that any ring and any field is a semiring), see
also other examples below.

We consider a semiring $\semr$ endowed with a {\em partial order}
$\leq$, i.e., binary relation $\leq$ such that 1) $a\leq b$, $b\leq
c$ imply $a\leq c$, 2) $a\leq b$, $b\leq a$ imply $a=b$, 3) $a\leq
a$. In the case of idempotent addition ($a+a=a$), one defines a 
{\em canonical order} by $a\leq b\Leftrightarrow a+b=b$.

To model both max-times algebra and the usual nonnegative
algebra, we may assume that the following axioms are satisfied.

\begin{itemize}
\item[A1.] any countable ascending chain (i.e., linearly ordered subset)  in $\semr$ bounded from above has
supremum, and any countable descending chain has infimum;
\item[A2.] addition is nondecreasing: $a+b\geq a$ and $a+b\geq b$;
\item[A3.] both operations distribute over any supremum or infimum of any such chain in
$\semr$, i.e.
\begin{equation}
\label{e:distr}
\begin{split}
a+ \sup_{\mu} b_{\mu} = \sup_{\mu} (a+b_{\mu}),\qquad  &
a'\cdot \sup_{\mu} b_{\mu}= \sup_{\mu} (a'\cdot b_{\mu})\\
c+ \inf_{\mu} d_{\mu} = \inf_{\mu} (c+d_{\mu}),\qquad  & c'\cdot
\inf_{\mu} d_{\mu}= \inf_{\mu} (c'\cdot d_{\mu})
\end{split}
\end{equation}
for any countable bounded ascending chain
$\{b_{\mu}\}\subseteq\semr$, countable descending chain
$\{d_{\mu}\}\subseteq \semr$, elements $a,c,a',c'\in \semr$.
\end{itemize}

Axiom A2 implies that the semiring is {\bf nonnegative}: $a\geq 0$
for all $a$, and {\em antinegative}: $a+b=0$ implies $a=b=0$. Axiom
A3 implies that both arithmetic operations are monotone.

The operations of $\semr$ extend to matrices and vectors in the
usual way. Moreover we can compute matrix powers $A^k$ for $k\geq
0$, where we assume $A^0=I$, the identity matrix, where all diagonal
entries equal to $\bunity$ and all off-diagonal entries equal to
$\bzero$. The extension of notions of associated digraph and access
relations is also evident, provided that there are no zero divisors.

Note that partial order in $\semr$ is extended to $\semr^n$ and
$\semr^{m\times n}$ (matrices with $m$ rows and $n$ columns over
$\semr$) componentwise. The monotonicity of addition and
multiplication is preserved. Moreover,
distributivity~\eqref{e:distr} also extends to matrices and vectors:
\begin{equation}
\label{e:distmats}
\begin{split}
A+ \sup_{\mu} B^{\mu} = \sup_{\mu} (A+B^{\mu}),\qquad  &
A'\cdot \sup_{\mu} B^{\mu}= \sup_{\mu} (A'\cdot B^{\mu}),\\
C+ \inf_{\mu} D^{\mu} = \inf_{\mu} (C+D^{\mu}),\qquad & C'\cdot
\inf_{\mu} D^{\mu}= \inf_{\mu} (C'\cdot D^{\mu}).
\end{split}
\end{equation}
Here $\{B^{\mu}\},\{D^{\mu}\}$ are chains of matrices (ascending and
descending, respectively), where $\{B^{\mu}\}$ is bounded from
above.

Indeed, the distributivity of addition is verified componentwise.
Let us verify the $\sup$-distributivity for multiplication. Let $n$
be the number of columns of $C'$. We have:
\begin{equation}
\label{matrix-inf} (C'\cdot\sup_{\mu\in\Nat} D^{\mu})_{ik} =
\sum_{j=1}^n c'_{ij}(\sup_{\mu\in\Nat} d_{jk}^{\mu}) = \sup_{\kappa}
\sum_{j=1}^n c'_{ij} d_{jk}^{\kappa(j)},
\end{equation}
where $\Nat$ denotes a countable set and the last supremum is taken over all mappings $\kappa$ from
$\{1,\ldots,n\}$ to the natural numbers. The last equality is due to
the scalar $\sup$-distributivity. Now denote $\nu:=\max_{j=1}^n
\kappa(j)$ and observe that
\begin{equation}
\sum_{j=1}^n c'_{ij} d_{jk}^{\kappa(j)} \leq \sum_{j=1}^n c'_{ij}
d_{jk}^{\nu},
\end{equation}
since $d_{jk}^{\nu}$ are ascending chains. This implies that in the
last supremum of~\eqref{matrix-inf} we can restrict maps
$\kappa$ to identity, obtaining that
\begin{equation}
\sup_{\kappa} \sum_{j=1}^n c'_{ij} d_{jk}^{\kappa(j)} =\sup_{\nu}\sum_{j=1}^n c'_{ij} d_{jk}^{\nu} =\sup_{\nu} (C'D)_{ik}.
\end{equation}
Thus the matrix $\sup$-distributivity also holds. The
$\inf$-distributivity can be checked along the same lines replacing
infimum by supremum, and ascending chains by descending chains.

It can be checked that axioms A1-A3 and matrix
distributivity~\eqref{e:distmats} provide sufficient ground for the
proofs of Theorems~\ref{t:equivalence} and~\ref{Th main 1}.

For the alternative proof of Theorem~\ref{Th main 1} given in Remark~\ref{r:krivulin} the system A1-A3 has to be modified.
Note that the main part of the proof
after~\eqref{leastwdef} does not need anything but the existence of
sups of bounded ascending chains and the distributivity of addition over such sups. It is only the starting representation $x=u+\underline{w}$, where
$u=A^*b$ and $\underline{w}$ is the least vector $w$ satisfying $x=u+w$, which may need more than that.

We impose A1, A2 and the part of A3 asserting the distributivity of
addition and multiplication with respect to sup's of ascending chains, which is needed for Theorem~\ref{t:equivalence}, that is, for $u=A^*b$ to be the least solution of
$x=Ax+b$. We also require that there is at least one vector
$w$ satisfying $x=u+w$. This will hold if we impose
\begin{itemize}
\item[A4.] For all $a,c\in\semr$ such that $a\leq c$ there is $b\in\semr$ such that $a+b=c$.
\end{itemize}
In addition, in order to get the least $w$ satisfying $x=u+w$,
we impose the distributivity of $+$ with respect to
arbitrary inf's. Notice that in the case of an idempotent semiring
we define the order canonically  ($a\leq b\Leftrightarrow a+b=b$), 
and the axioms A2 and A4 are satisfied automatically.

Now we consider some examples of semirings where Theorems~\ref{t:equivalence} and~\ref{Th main 1} are valid.
In particular, axioms A1-A3 are satisfied for these examples.

{\bf Examples 1,2:} Max-times algebra, classical nonnegative algebra
(see Prerequisites).

{\bf Example 3: Max-min algebra.} Interval $[0,1]$ equipped with $a
b:=\min(a,b)$ and $a+b:=\max(a,b)$.

{\bf Example 4: {\L}ukasiewicz algebra.} Interval $[0,1]$ equipped
$ab:=\max(0,a+b-1)$ and $a+b:=\max(a,b)$.

{\bf Example 5: Distributive lattices.} Recall that a {\em lattice}
is a partially ordered set \cite{Bir:79} where any two elements
$a,b$ have the least upper bound $a\vee b:=\sup (a,b)$ and the
greatest lower bound $a\wedge b:=\inf(a,b)$. A lattice is called
{\em distributive} if the following laws hold:
\begin{equation}
\label{e:distrlat} a\vee (b\wedge c)=(a\vee b)\wedge (a\vee c),\quad
a\wedge (b\vee c)=(a\wedge b)\vee (a\wedge c)
\end{equation}
When a lattice also has the lowest element $\epsilon$ and the
greatest element $\top$, it can be turned into a semiring by setting
$ab:= a\wedge b$, $a+b:=a\vee b$, $\bzero=\epsilon$ and
$\bunity=\top$. To ensure that the axioms A1 and A3 hold, we require
that the lattice is {\em complete}, i.e., that $\vee_{\alpha}
a_{\alpha}$ and $\wedge_{\beta} b_{\beta}$ exist for all subsets
$\{a_{\alpha}\}$ and $\{b_{\beta}\}$ of the lattice, and that the
distributivity can be extended:
\begin{equation}
\label{e:infdistrlat} a\vee \wedge_{\beta}
b_{\beta}=\wedge_{\beta}(a\vee b_{\beta}),\quad b\wedge
\vee_{\alpha} a_{\alpha}= \vee_{\alpha} (b\wedge a_{\alpha}).
\end{equation}
Max-min algebra is a special case of this example.

{\bf Example 6: Idempotent interval analysis}. Suppose that $a_1,a_2\in\semr$ where $\semr$ satisfies
A1-A3, and consider the semiring of ordered pairs $(a_1,a_2)$, where $a_1\leq a_2$ and the operations of $\semr$
are extended componentwise. This semiring, satisfying A1-A3, is the basis of idempotent interval analysis as
introduced in~\cite{LS-98}.

{\bf Example 7: Extended order complete vector lattices}. We can consider
a semiring of all sequences $(a_1,a_2,\ldots)$ where $a_i \in\semr$ and $\semr$ satisfies A1-A3,
with the operations extended componentwise. A generalization of Example 6 is then a semiring of 
all ordered sequences $a_1\leq a_2\leq\ldots$ where $a_i\in\semr$.

{\bf Example 8: Semirings of functions.} Further extension of Example 7 to functions on a continuous domain is also evident (following~\cite{CGQS-05,LMS-01}).
As an example of a subsemiring of functions satisfying A1-A3, one may consider convex functions on
the real line, equipped with the operations of componentwise max (as addition) and componentwise
addition (as multiplication). In the spirit of max-plus semiring, we allow a function to take $-\infty$ values (which are absorbing). 
To verify A1-A3, recall that a function $f$ is convex if the set 
$\{(x,t)\mid t\geq f(x)\}$ is convex (providing connection to the well-known properties of convex sets).
In particular, the addition corresponds to the intersection of convex sets, and the multiplication corresponds to the Minkowski sum of convex sets. Note that the inf of descending chain of convex functions is also computed componentwise. 
As another example, we can consider a semiring, where an element is a class of functions on a continuous domain different only on a countable subset. Then, all countable sups or infs are well defined, since any two members
of the class corresponding to such sup or inf will differ only on a countable subset, and axioms A1-A4 are verified componentwise, as above.

\if{
{\bf Example 8: Conditionally complete lattice ordered group.} A
{lattice ordered group} is a group which is also a lattice. It is
called {\em conditionally complete} if $\vee_{\alpha} a_{\alpha}$
belongs to the group for any subset $\{a_{\alpha}\}$ bounded from
above. Dually it can be shown that $\wedge_{\beta} b_{\beta}$
belongs to the group for any subset $\{b_{\beta}\}$ bounded from
below. The lattice can be further completed by the least element
$\epsilon$, with arithmetic  $a\epsilon =\epsilon a=\epsilon$. Then
this can be converted to a semiring by setting $a+b:=a\vee b$,
$\bzero:=\epsilon$, $\bunity=e$ (the neutral element of the group),
and  keeping the group law as multiplication. Indeed, it can be
verified that the law $(a\vee b) c= ac\vee bc$ holds for all
elements $a,b,c$ of a lattice-ordered group. By a theorem of
Iwasawa, see \cite{Bir:79}, Theorem 28, conditionally complete
lattice ordered group is commutative.  It can be deduced
from~\cite{Bir:79}, Theorem 26 that
\begin{equation}
\label{inf-distr-mult} c\cdot\mysup_{\alpha} a_{\alpha}
=\mysup_{\alpha} c a_{\alpha},\quad c\cdot\myinf_{\beta} b_{\beta} =
\myinf_{\beta} c b_{\beta}
\end{equation}
for any element $c$, and any subsets $\{a_{\alpha}\}$ and
$\{b_{\beta}\}$ of the group. Thus the part of \eqref{e:distr} with
multiplication (right-hand side) is satisfied. Moreover,
by~\cite{Bir:79}, Theorem 25 we also have that the lattice of any
lattice-ordered group is distributive, and the laws
\begin{equation}
\label{inf-distr-add} c\wedge\mysup_{\alpha} a_{\alpha}
=\mysup_{\alpha} (c\wedge a_{\alpha}),\quad c\vee \myinf_{\beta}
b_{\beta} = \myinf_{\beta} (c\vee b_{\beta})
\end{equation}
hold whenever $\mysup_{\alpha} a_{\alpha}$ and $\myinf_{\beta}
b_{\beta}$ belong to the group. This implies that the part of
\eqref{e:distr} with addition is satisfied, if we also impose that
$\myinf_{\beta} (c\vee b_{\beta})=c$ when $\{b_{\beta}\}$ is not
bounded from below. Thus, a lattice-ordered group with this
additional assumption provides another example of a semiring
satisfying axioms (A1) - (A3). Note that max-times algebra is a
special case of this example.
}\fi

\medskip
Note that the Kleene star~(\ref{closures}) always converges in
Examples 3-5. Moreover, it can be truncated for $k\geq n$, for an
$n\times n$ matrix, so that $A^*=I+A+\ldots + A^{n-1}$, which
follows from the optimal path interpretation, the finiteness of
associated digraph, and because the matrix entries do not exceed
$\bunity$. Hence $A^*b$ is well defined for any $A,b$, and $x=Ax+b$
is always solvable, with the solution set described by
Theorem~\ref{Th main 1}. In Examples 6-8 the convergence of Kleene star should hold
for the matrices corresponding to each component of the semiring (for the last ``subexample'', excluding a 
countable subset of components).

Finally we observe that theorems formally like \ref{t:equivalence}
and \ref{Th main 1} also hold in the case a linear operator leaving
invariant a proper cone in ${\mathbb R}^n$, see \cite{TS-03}, Theorem
3.1, where an analog of Theorem \ref{Th main 2} is also proved.

\if{
\subsection{Case of distributive lattice}

In the case of distributive lattice the theory of $Z$-equations
$\lambda x = Ax+b$ is different from the theory of $x=Ax+b$ since
$\lambda$ is not invertible. However, we observe the following.

\begin{proposition}
\label{zeq-distlat} Over a distributive lattice, equation~$\lambda
x=Ax+b$ is solvable if and only if $b_i\leq\lambda$ for all $i$. In
this case,
\begin{itemize}
\item[a)] $x^0=A^*b$ is the least solution of $\lambda x=Ax+b$ ;
\item[b)] For any solution $x$ of $\lambda x= Ax+b$, $\lambda x$ is a solution
of $x=Ax+b$. Hence $\lambda x= x^0+v$ where $v$ satisfies $Av=v$.
\end{itemize}
\end{proposition}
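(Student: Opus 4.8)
The plan is to rewrite everything in lattice notation and argue componentwise. With $a+b=a\vee b$ and $ab=a\wedge b$, the system $\lambda x=Ax+b$ becomes
\[
\lambda\wedge x_i=\Bigl(\bigvee_j (a_{ij}\wedge x_j)\Bigr)\vee b_i,\qquad i=1,\ldots,n.
\]
The solvability criterion is then almost immediate: if $x$ is any solution, the right-hand side is $\ge b_i$ while the left-hand side is $\le\lambda$, forcing $b_i\le\lambda$ for all $i$; the converse will follow once I exhibit the explicit solution $x^0=A^*b$ below. So I may assume $b_i\le\lambda$ throughout and concentrate on (a) and (b).

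For (a) I would first verify that $x^0=A^*b$ solves $\lambda x=Ax+b$. Here $A^*$ converges and in fact truncates (as noted for Examples 3--5), and by Theorem~\ref{t:equivalence} it satisfies $x^0=Ax^0+b$. The key point is that every entry of $A^*$ is $\le\bunity=\top$, so $(A^*)_{ij}\wedge b_j\le b_j\le\lambda$ and hence $x^0_i=\bigvee_j (A^*)_{ij}\wedge b_j\le\lambda$; therefore $\lambda\wedge x^0_i=x^0_i$ and $\lambda x^0=x^0=Ax^0+b$. To see that $x^0$ is the \emph{least} solution, take any solution $x$: as above $\lambda\wedge x_i$ dominates both $(Ax)_i$ and $b_i$, so $x\ge Ax$ and $x\ge b$. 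Monotonicity of $y\mapsto Ay$ then gives $x\ge A^kx\ge A^kb$ for every $k$, and, joining these, $x\ge\bigvee_{j\ge 0}A^jb=A^*b=x^0$.

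For (b) I set $y:=\lambda x$ and show $y$ solves $x=Ax+b$. The crucial computation is the distributivity~\eqref{e:distrlat} of $\wedge$ over the finite join:
\[
(Ay)_i=\bigvee_j a_{ij}\wedge(\lambda\wedge x_j)=\lambda\wedge\bigvee_j (a_{ij}\wedge x_j)=\lambda\wedge (Ax)_i.
\]
Since $y_i=(Ax)_i\vee b_i\le\lambda$ we get $(Ax)_i\le\lambda$, so $\lambda\wedge(Ax)_i=(Ax)_i$ and thus $(Ay)_i\vee b_i=(Ax)_i\vee b_i=y_i$, i.e. $y=Ay+b$. Knowing that $x=Ax+b$ is solvable, Theorem~\ref{Th main 1}(a) describes its solution set as $\{v+A^*b;\ Av=v\}$, which gives $\lambda x=y=x^0+v$ with $Av=v$, completing (b).

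The arithmetic is light; the one thing to watch is the interaction of $\lambda$ with $\wedge$, and the crux --- the only genuinely load-bearing idea --- is the pair of bounds $x^0_i\le\lambda$ in (a) and $(Ax)_i\le\lambda$ in (b). Both exploit that $\bunity$ is the top element, so that multiplication (being $\wedge$) never increases an entry. Before invoking Theorem~\ref{Th main 1} I would also confirm at the outset that a complete distributive lattice satisfying~\eqref{e:infdistrlat} obeys axioms A1--A3 and that $A^*b$ converges there, both of which are recorded in Example 5 and the subsequent remark on truncation of the Kleene star.
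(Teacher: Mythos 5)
Your proof is correct and follows essentially the same route as the paper's: necessity of $b_i\le\lambda$ because multiplication by $\lambda$ is $\wedge$, sufficiency via the bound $x^0=A^*b\le\lambda$ (so that $\lambda x^0=x^0=Ax^0+b$), and part (b) by absorbing $\lambda$ through idempotency/distributivity to show $\lambda x$ solves $x=Ax+b$ and then invoking Theorem~\ref{Th main 1}. The only cosmetic difference is in the leastness argument of (a): the paper takes the meet $y\wedge x^0$ of an arbitrary solution with $x^0$ and iterates the inequality $z\ge Az+b$ for that meet, whereas you iterate directly on the solution itself to get $x\ge A^k b$ for all $k$ --- the same idea, slightly streamlined.
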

\begin{proof}
Clearly, for the solvability of $\lambda x = Ax+b$ it is necessary that
$b_i\leq\lambda$ for all $\lambda$. In this case,the same holds for $x^0=A^*b$.
As $x^0$ satisfies $x=Ax+b$, it also satisfies
$\lambda x= Ax+b$, hence the condition is necessary and sufficient.
Now assume that it holds. \\
a): If there is another solution $y$, then taking $\wedge$ of the
two Z-equations and exploiting the monotonicity of $A$ we conclude
that $y\wedge x^0$ satisfies $\lambda (y\wedge x^0)\geq A(y\wedge
x^0) +b$ and hence $(y\wedge x^0)\geq A(y\wedge x^0) +b$. Iterating
this inequality and using $x^0=A^*b$ we obtain
$y\wedge x^0\geq x^0$.\\
b): Multiply~\eqref{zeq} by $\lambda$ using
$\lambda\lambda=\lambda$, and apply Theorem~\ref{Th main 1}.
\end{proof}

Now we restrict to $x=Ax+b$ (in some sense this can be justified by
Theorem~\ref{zeq-distlat}.) We aim to obtain an explicit formula for
$v=\inf_k A^k x$, in the representation $x=x^0+v$. Our argument is a
simple and more general version of the results in \cite{Cec-92}, and
similar to~\cite{Tan-98}. Denote by $A_x$ the matrix with columns
$(A_x)_{\cdot i}:=A_{\cdot i} x_i$, and by $\boldunity$ a vector
with all components equal to $\bunity$.

\begin{lemma}
\label{A_x} If $x$ satisfies $Ax\leq x$, then for all $k$ one has
$A^k x= A_x^k \boldunity$, over a distributive lattice.
\end{lemma}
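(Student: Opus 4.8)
The plan is to prove the identity by induction on $k$, after isolating two elementary facts about the meet-based product of the lattice semiring. Throughout, recall that here multiplication is $\wedge$ and addition is $\vee$, so that $\boldunity$ (all entries $\top$) acts as a right unit for the entrywise meet: for any matrix $M$ one has $(M\boldunity)_i=\bigvee_j (M_{ij}\wedge\top)=\bigvee_j M_{ij}$. Applying this to $M=A_x$, whose $(i,j)$ entry is $a_{ij}\wedge x_j$, immediately gives $(A_x\boldunity)_i=\bigvee_j (a_{ij}\wedge x_j)=(Ax)_i$, i.e. $A_x\boldunity=Ax$. This settles the base case $k=1$. (For $k=0$ the asserted equality would read $x=\boldunity$, which is false in general, so the statement is to be understood for $k\geq 1$.)

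The second ingredient is the key auxiliary identity: \emph{if $y\leq x$ componentwise, then $A_x y=Ay$}. To see this I compute entrywise, $(A_x y)_i=\bigvee_j (a_{ij}\wedge x_j\wedge y_j)$; since $y_j\leq x_j$ forces $x_j\wedge y_j=y_j$, the expression collapses to $\bigvee_j(a_{ij}\wedge y_j)=(Ay)_i$. No distributivity is needed here beyond the absorption $x_j\wedge y_j=y_j$; distributivity enters only through the associativity of matrix multiplication over the semiring, which is what lets me write $A_x^{k+1}\boldunity=A_x(A_x^{k}\boldunity)$ in the induction step.

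With these in hand the induction is short. First I observe that $Ax\leq x$, together with the monotonicity of the lattice operations (hence of the operator $A$), yields a nonincreasing chain $x\geq Ax\geq A^2x\geq\cdots$, so in particular $A^k x\leq x$ for every $k\geq 0$. Assuming inductively that $A_x^{k}\boldunity=A^k x$, I get $A_x^{k+1}\boldunity=A_x(A_x^{k}\boldunity)=A_x(A^k x)$, and since $A^k x\leq x$ the auxiliary identity applies with $y=A^k x$, giving $A_x(A^k x)=A(A^k x)=A^{k+1}x$. This closes the induction and establishes $A^k x=A_x^k\boldunity$ for all $k\geq 1$.

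The only genuinely delicate point is ensuring that the auxiliary identity is always invoked at an admissible argument, i.e. that the vector fed into $A_x$ at each stage remains below $x$; this is exactly what the hypothesis $Ax\leq x$ guarantees through the nonincreasing chain $A^k x\leq x$. Everything else is routine bookkeeping with the idempotent operations, and in fact no property of the distributive lattice beyond its being a $(\wedge,\vee)$-semiring is used.
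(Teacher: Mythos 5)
Your proof is correct, and it takes a genuinely different route from the paper's. The paper proves the identity in one shot at the level of path expansions: it writes $(A^k x)_{i_0}$ and $(A_x^k\boldunity)_{i_0}$ as suprema over all paths $(i_0,i_1,\ldots,i_k)$ of the monomials $a_{i_0i_1}a_{i_1i_2}\cdots a_{i_{k-1}i_k}x_{i_k}$ and $a_{i_0i_1}x_{i_1}a_{i_1i_2}x_{i_2}\cdots a_{i_{k-1}i_k}x_{i_k}$ respectively, and then suppresses the redundant factors $x_{i_1},\ldots,x_{i_{k-1}}$ in each monomial, using exactly the inequality $a_{i_li_{l+1}}\wedge x_{i_{l+1}}\leq (Ax)_{i_l}\leq x_{i_l}$ that also drives your absorption lemma. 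Your induction (base case $A_x\boldunity=Ax$; auxiliary identity $A_xy=Ay$ whenever $y\leq x$; the nonincreasing chain $x\geq Ax\geq A^2x\geq\cdots$) packages the same absorption mechanism one matrix product at a time. What this buys is cleaner bookkeeping --- no multi-indices --- and an explicit localization of where the hypothesis $Ax\leq x$ enters (keeping every iterate below $x$ so the auxiliary identity applies) and where lattice distributivity is genuinely needed (associativity of matrix products over the semiring). What the paper's version buys in exchange is the explicit path-level picture, which it reuses immediately afterwards in the surrounding results (weights of paths in the digraph of $A_x$), so the expansion there is not wasted work. Finally, your remark that the statement must be read for $k\geq 1$ (since $A^0x=x$ while $A_x^0\boldunity=\boldunity$) is a small but genuine precision over the paper's ``for all $k$''.
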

\begin{proof}
We need to demonstrate that
\begin{equation}
\label{demo1} \sum_{i_1,\ldots, i_k} a_{i_0i_1}a_{i_1i_2}\ldots
a_{i_{k-1}i_k}x_{i_k} = \sum_{i_1,\ldots, i_k}
a_{i_0i_1}x_{i_1}a_{i_1i_2}x_{i_2}\ldots a_{i_{k-1}i_k}x_{i_k}.
\end{equation}
Indeed, using the inequality $a_{i_1i_2}x_{i_2}\leq x_{i_1}$ we can
suppress $x_{i_1}$ on the r.h.s., and we can proceed further until
$a_{i_{k-1}i_k}x_{i_k}\leq x_{i_{k-1}}$ which supresses
$x_{i_{k-1}}$ (but not $x_{i_k}$). So we obtain the l.h.s.
of~\eqref{demo1}.
\end{proof}

The following result is known, the proof is given for convenience of
the reader.

\begin{lemma}[\cite{Tan-98}]
\label{stabilize} Over a distributive lattice $L$, for $A\in
L^{n\times n}$ and $x$ satisfying $Ax\leq x$, vectors $A^kx$
stabilize starting from $k=n$.
\end{lemma}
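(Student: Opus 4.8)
The plan is to show that for $x$ satisfying $Ax \leq x$ over a distributive lattice, the sequence $A^k x$ is nonincreasing and eventually constant for $k \geq n$. First I would invoke Lemma~\ref{A_x}, which reduces the problem to understanding the powers $A_x^k \boldunity$, where $A_x$ is the matrix with columns $(A_x)_{\cdot i} = A_{\cdot i} x_i$. By the optimal-path interpretation developed in the Prerequisites, the $i_0$-th component of $A_x^k \boldunity$ is the supremum over all paths of length $k$ starting at $i_0$ of the lattice-product (meet) of the weights $a_{i_{j-1}i_j} x_{i_j}$ along the path, meeting finally with $\bunity$. The key structural fact is that since $\wedge$ and $\vee$ distribute, each such path-weight is itself a meet of finitely many entries, so the entire quantity $(A_x^k\boldunity)_{i_0}$ is a supremum of meets of the finitely many distinct scalar values appearing among the $a_{ij}$ and $x_i$.

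The main step is a pigeonhole/path-shortening argument. Since $Ax \leq x$ gives $a_{i_{j-1}i_j} x_{i_j} \leq x_{i_{j-1}}$ along any path, extending a path never increases its weight; this re-proves the nonincreasing property $A^{k+1}x \leq A^k x$ directly. For stabilization, I would argue that any path of length $k \geq n$ must repeat a node, say it visits node $p$ at two positions. Because the meet along the path is order-insensitive and because the cyclic portion between the two visits to $p$ can only decrease the weight (each arc-times-$x$ factor is $\leq$ the preceding coordinate of $x$, and in particular the cycle weight is $\leq \bunity$ in the relevant sense), one can excise the cycle to produce a shorter path from $i_0$ whose weight dominates the original. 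Conversely, any path of length exactly $n$ already realizes the supremum attainable by longer paths, because repeating any suffix cannot strictly increase the meet. Thus the supremum defining $(A_x^k \boldunity)_{i_0}$ is achieved already among paths of length at most $n$, and is identical for all $k \geq n$.

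I would then assemble these observations: for $k \geq n$ every path contributing to $A_x^k\boldunity$ can be reduced to a path of length at most $n$ with at least the same weight, and every path of length at most $n$ can be padded (by appending a loop-free continuation or by the monotonicity allowing repetition without decrease of the meet when the excess factors are $\geq$ the relevant coordinates) to a path of length $k$ with the same weight, so the two suprema coincide. Hence $A_x^k \boldunity = A_x^n \boldunity$ for all $k \geq n$, and by Lemma~\ref{A_x} this yields $A^k x = A^n x$ for $k \geq n$, which is the claim.

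The hard part will be making the path-padding direction fully rigorous: one must verify that a path of length $< n$ can always be extended to length exactly $k$ without strictly decreasing its meet-weight, which relies on being able to insert a self-loop or a dominating detour at some node. This is where the hypothesis $Ax \leq x$ and the idempotency/absorption of the lattice operations (so that re-meeting a dominating factor leaves the weight unchanged) do the real work, and where I would take the most care to handle the case of nodes without usable self-loops by reusing an already-traversed dominating arc.
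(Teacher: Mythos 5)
Your reduction via Lemma~\ref{A_x} and your observation that $Ax\leq x$ makes the sequence $A^kx$ nonincreasing both match the paper, and that monotonicity already gives the easy inequality $A^kx\leq A^nx$ for $k\geq n$ (your cycle-excision discussion is not even needed for that direction). The genuine gap is exactly where you flagged it: the claim that every path of length at most $n$ can be padded to a path of length \emph{exactly} $k$ with the same weight. This claim is not salvageable in the form you propose. Appending a loop-free continuation meets new arc weights into the product and in general strictly decreases it; the only padding that provably preserves the weight is re-traversing a cycle already contained in the path (idempotency $a\wedge a=a$), but that changes the length only by multiples of that cycle's length, so you cannot hit an arbitrary target length $k$ -- there need not be any self-loop or ``dominating detour'' available, and the case you promise to ``take the most care'' with is precisely the case your method cannot handle. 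A further subtlety: over a general distributive lattice the supremum defining $(A^kx)_i$ is a join of path weights and need not be attained by any single path, so any argument that matches paths one by one, or speaks of a path that ``realizes the supremum,'' is already on the wrong footing.

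The paper's proof avoids both problems with two ideas you are missing. First, it never compares lengths $n$ and $k$ directly: it fixes $k\geq n$, notes that \emph{every} path of length $k$ contains a cycle, and chooses $l$ to be a common multiple of the lengths of all elementary cycles occurring in these (finitely many) paths. Then each length-$k$ path can be extended to length exactly $k+l$ by appending copies of \emph{its own} cycle, which preserves its weight by idempotency; since this works for every contributing path simultaneously, it yields $(A^{k+l}x)_i\geq (A^kx)_i$ at the level of suprema, with no need to pick an optimal path. Second, it closes the argument with a sandwich: monotonicity gives $A^kx\geq A^{k+1}x\geq\cdots\geq A^{k+l}x$, so the two ends being equal forces $A^kx=A^{k+1}x$ for every $k\geq n$, which is the stabilization claim. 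The common-multiple choice of $l$ plus this sandwich is what sidesteps the exact length-matching problem that your padding step cannot solve; without some substitute for these two moves, your proof does not go through.
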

\begin{proof}
By  Lemma~\ref{A_x} we can assume $x=\boldunity$. Then the entries
$(A^kx)_i$ represent the largest weights (capacities) of paths
starting at $i$. For $k\geq n$ any such path contains a cycle. Fix
$k$ and find an $l$ which divides the lengths of all elementary
cycles contained in such paths (there is only a finite number of
such paths and cycles). Then for each path with length $k$ starting
at $i$ there is a path of length $k+l$ starting at $i$ with the same
weight. Indeed, it suffices to append to the former path several
copies of any cycle, which does not change the weight (capacity) of
the path, by the idempotency of multiplication. Therefore
$(A^{k+l}x)_i\geq (A_k x)_i$. However, we also have
$(A^{k+l}x)_i\leq (A_k x)_i$ since $x$ satisfies $Ax\leq x$. This
shows $A^kx=A^{k+l}x$ and also $A^k x=A^{k+1}x=\ldots=A^{k+l}x$
since $A^k x\geq A^{k+1}x\geq\ldots\geq A^{k+l}x$.
\end{proof}

Note that as $x\leq\boldunity$ for any vector $x$, and as $x\leq y$
implies $Ax\leq Ay$, we obtain that $v\leq A^n\boldunity$, for any
vector $v$ satisfying $Av=v$. Thus, there exists the greatest
eigenvector of $A$ in any distributive lattice (with the greatest
element $\bunity$), generalizing \cite{Cec-92} Theorem 3.

Denote by $\digr(A,x)$ the weighted digraph of the matrix $A_x$
introduced above, and denote by $\digr(A,x,t)$ the subgraph
consisting of edges $(i,j)$ such that $a_{ij}\geq t$.

\begin{theorem}
\label{distlat} Over a distributive lattice $L$, suppose that $A\in
L^{n\times n}$ and $x$ satisfies $Ax\leq x$. Then the $i$th
component of $v=\inf_k A^k x$ equals
\begin{itemize}
\item[a)] The supremum of weights of all paths in $\digr(A,x)$ starting at $i$, which contain a cycle,
\item[b)] The supremum $t^*_i$ of levels $t$ such that node $i$ accesses a cycle in $\digr(A,x,t)$,
\end{itemize}
for any $i=1,\ldots,n$.
\end{theorem}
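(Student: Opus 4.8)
The plan is to reduce the infimum to a finite path-weight computation and then read off both descriptions combinatorially. First I would invoke Lemma~\ref{stabilize}, which asserts that the nonincreasing sequence $A^k x$ stabilizes from $k=n$, so that $v=A^k x$ for every $k\geq n$; in particular $v$ is genuinely attained. Next I would apply Lemma~\ref{A_x} to rewrite $A^k x=A_x^k\boldunity$, and then use the path interpretation of matrix powers from the Prerequisites in the digraph $\digr(A,x)=D_{A_x}$: since addition is $\vee$ and multiplication is $\wedge$, the entry $(A_x^k)_{ij}$ is the join over all length-$k$ paths from $i$ to $j$ of the meet of the traversed edge weights, and taking the join over $j$ (against $\boldunity$, whose entries equal $\bunity$) yields
\begin{equation*}
v_i=(A^k x)_i=\bigvee\{w(p): p \text{ a path in }\digr(A,x)\text{ of length }k\text{ starting at }i\},\qquad k\geq n,
\end{equation*}
where $w(p)$ denotes the meet of the edge weights along $p$.

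With this in hand, part (a) splits into two inequalities. Writing $S_i$ for the supremum of $w(p)$ over all paths from $i$ that contain a cycle, I would first take $k=n$: any path of length $n$ visits $n+1$ nodes and hence, by pigeonhole, contains a cycle, so each such $w(p)\leq S_i$ and therefore $v_i\leq S_i$. For the reverse inequality I would exploit the idempotency of $\wedge$: given any path $p$ from $i$ containing a cycle $C$, inserting extra copies of $C$ produces paths of arbitrarily large length whose weight is unchanged, because repeating the edges of $C$ does not alter the meet. Choosing the number of copies so that the pumped path has some length $k\geq n$, I get $v_i=(A^k x)_i\geq w(\text{pumped path})=w(p)$; taking the supremum over all such $p$ gives $v_i\geq S_i$, whence $v_i=S_i$.

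For part (b) I would derive the threshold description directly from (a). If $p$ is a path from $i$ containing a cycle, then every edge of $p$ has weight at least $w(p)$, so $p$ lives in the subgraph $\digr(A,x,w(p))$ and witnesses that $i$ accesses a cycle at level $t=w(p)$; thus $w(p)\leq t^*_i$ and consequently $v_i=S_i\leq t^*_i$. Conversely, for any level $t$ at which $i$ accesses a cycle in $\digr(A,x,t)$, concatenating a path from $i$ to that cycle with the cycle itself yields a path $p$ from $i$ containing a cycle, all of whose edges have weight $\geq t$, so $w(p)\geq t$ and $v_i\geq w(p)\geq t$ by (a); taking the supremum over admissible $t$ gives $v_i\geq t^*_i$, so that $v_i=t^*_i$.

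The routine part is the pigeonhole argument; the conceptual crux, which I would highlight, is the cycle-pumping step, where the idempotency $a\wedge a=a$ of lattice multiplication is exactly what keeps the weight fixed under repetition of a cycle and lets a single finite path-with-cycle be realised inside $(A^k x)_i$ for arbitrarily large $k$. Completeness of $L$ is used throughout to guarantee that the suprema $S_i$ and $t^*_i$ exist.
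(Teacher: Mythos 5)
Your proposal is correct and follows essentially the same route as the paper's own proof: reduce via Lemma~\ref{A_x} to the weighted digraph $\digr(A,x)$ of $A_x$, use Lemma~\ref{stabilize} to identify $v_i$ with the optimal weight of length-$k$ paths for $k\geq n$, note that all such paths contain a cycle while shorter paths with a cycle can be pumped (idempotency of $\wedge$) up to length $\geq n$, and then obtain b) from a) by the same two inequalities relating path weights to access levels in $\digr(A,x,t)$. Your write-up merely makes explicit what the paper leaves implicit (the pigeonhole step and the role of idempotency), so there is nothing substantive to add or correct.
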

\begin{proof} By Lemma~\ref{A_x} we can assume w.l.o.g. that $x=\boldunity$.

a): By Lemma~\ref{stabilize} the sequence $A^k x$ stabilizes after
$k\geq n$, so $v_i$ is the greatest weight of all paths of length
$k\geq n$ that start at $i$. All such paths contain a cycle. But if
we have a path of smaller length containing a cycle, we can add more
copies of that cycle until the length is at least $n$, without
changing the weight. This shows a).

b): If we have a path with weight $t$ in $\digr(A,x)$ starting at
$i$ and containing a cycle, then this cycle will be accessed from
$i$ in $\digr(A,x,t)$ (by that path). This shows $t^*_i\geq v_i$.
The other way around, if a cycle is accessed from $i$ in
$\digr(A,x,t)$ then the weight of accessing path including the cycle
is greater than equal to $t$, hence $v_i\geq t^*_i$.

\end{proof}
}\fi

\if{

}\fi

%\bibliographystyle{plain}
%\bibliography{mariecurie}

\end{document}